%%%%%%%%%%%%%%%%%%%%%%%%%%%%%%%%%%%%%%
\documentclass{amsart}

\usepackage{amssymb, amsmath, latexsym, a4}
\usepackage[latin1]{inputenc}
\usepackage{enumerate}
\usepackage[all]{xy}

\newtheorem{De}{Definition}[section]
\newtheorem{Th}[De]{Theorem}
\newtheorem{Pro}[De]{Proposition}
\newtheorem{Le}[De]{Lemma}
\newtheorem{Co}[De]{Corollary}

\def\ss{\sigma}
\def\xto#1{\xrightarrow[]{#1}}

\def\xot#1{\xleftarrow[]{#1}}

\let\dd\delta
\def\id{{\sf Id}}

\newcommand\bI{{\boldsymbol I}}
\newcommand\bG{{\boldsymbol G}}
\newcommand\bF{{\boldsymbol F}}

\def\Aut{{\sf Aut}}

\def\Out{{\sf Out}}

\def\1{^{-1}}

\def\ob{{\bf Ob}}

\let\then\Rightarrow

\def\bb{\beta}
\def\ab{\sf Ab}

\newcommand{\grupo}[1]{\langle #1\rangle}

\begin{document}

\title{An outline of obstruction theories of extensions via track categories}
\author{MAriam Pirashvili}

\maketitle

\begin{abstract}
Abelian track categories can be classified via the third Baues-Wirsching cohomology of small categories. This approach is used in this paper to compare and classify different generalisations of the obstruction theory of non-abelian group extensions, due to Cegarra, Garz\'on and Grandjean, Cegarra, Garz\'on and Ortega, and Chen, Du and Wang.
\end{abstract}

\section{Introduction}
The $3$-dimensional cohomology groups appear in the study of extensions of non-abelian groups. The general problem is that of constructing all short exact sequences given end groups $G$ and $\Pi$ and given an abstract kernel $\eta: \Pi\to G$. In \cite{e-m}, Eilenberg and MacLane showed that the existence of such extensions depends on an element, referred to as the obstruction, in the third cohomology $H^3(\Pi,Z(G))$.

There have been numerous generalisations of this obstruction theory. In \cite{cegarra,ortega}, Cegarra et al. defined graded extensions of categories by a group, with and without a monoidal structure on the category. More recently, in \cite{chen}, the authors defined extensions of groupoids by another groupoid. These theories are also related by the fact that they are all special cases of Grothendieck cofibrations.

In fact, we show in this paper that using the Baues-Wirsching cohomology for small categories, we can give a unified viewpoint for all these theories. We construct a class in the third Baues-Wirsching cohomology, which in different settings connects all of these classes in a non-trivial way. We exploit the crucial observation of Baues-Jibladze that abelian track categories are classified by the third Baues-Wirsching cohomology. Any such track category defines a class in the third cohomology. We show that all the above-mentioned classes can be obtained by choosing the appropriate track category.

This allows for a unified view of these related construction, as well as allowing for comparisons between them.

In particular, to obtain all Eilenberg-Maclane classes, we construct an abelian track category which has not been considered before. The objects of said track category are groups with surjective homomorphisms as $1$-morphisms and $2$-morphisms given by conjugation. This gives a class in $H^3({\mathcal G}_\sim, D^{\mathcal G})$, where ${\mathcal G}_\sim$ is the homotopy category of said track category, and the associated natural system $D^{\mathcal G}$ is the functor which assigns to each group its centre. Then for each group, we can consider the restriction to the one-object subcategory, with $1$-morphisms the outer automorphisms of the group and the centre of the group as the associated natural system, and thus recover the associated Eilenberg-Maclane class.

\section{Third cohomology and abelian track categories}

Track categories are groupoid enriched categories. A track category is \emph{abelian} if the automorphism group of any 1-arrow is abelian. By a fundamental observation of Baues and Jibladze \cite{BJ}, any abelian track category $\mathcal T$ defines an element in third cohomology, called the \emph{global Toda class} of $\mathcal T$.

In this section we recall the basic definitions and facts needed to explain this result. 

\subsection{Category of factorizations and natural systems} For a category $\bI$, one denotes by $\bF\bI$ the
\emph{category of factorizations of} $\bI$ \cite{BW}. Let us recall
that objects of the category $\bF\bI$ are morphisms $\alpha:i\to j$
of $\bI$. A morphism from $\alpha$ to $\beta:k\to l$ in $\bF\bI$ is a
pair $(\xi,\eta)$, where $\xi:k\to i$ and $\eta:j\to l$ are
morphisms in $\bI$ such that
$$
\beta=\eta\circ\alpha\circ\xi.
$$
In other words, the following diagram
$$
\xymatrix{
j\ar[r]^\eta&l\\
i\ar[u]^\alpha&k\ar[l]^\xi\ar[u]_\beta }
$$
commutes. If $(\xi',\eta')$ is also a morphism in $\bF\bI$ from $\beta:k\to l$ to $\beta':k'\to l'$:

$$\xymatrix{
j\ar[r]^\eta&l\ar[r]^{\eta'}&l'\\
i\ar[u]^\alpha&k\ar[l]^\xi\ar[u]_\beta&k'\ar[l]^{\xi'}\ar[u]_{\beta'}
 }
$$
then the composite morphism $\alpha\to\beta'$ in $\bF\bI$ is defined by
$$
(\xi',\eta')(\xi,\eta) = (\xi\xi',\eta'\eta).
$$
Clearly $(\xi,\eta)$ is an isomorphism in $\bF\bI$ iff both $\xi$ and $\eta$ are isomorphisms in $\bI$.

Let $\bI$ be a small category. A \emph{natural system of  abelian groups} on $\bI$
is a functor $D$ from the category $\bF\bI$ to the category $\ab$ of abelian
groups. For a natural system $D$ we usually denote the value of $D$
on $\alpha:i\to j$ by $D_\alpha$ as well as $D(\alpha)$. If $\alpha$
is the identity $\id_i:i\to i$ we write $D_i$ instead of
$D_{\id_i}$. For morphisms $\xi:k\to i$ and $\eta:j\to l$ we also
denote actions of $D(\xi,\id_j):D_\alpha\to D_{\alpha\xi}$, resp.
$D(\id_i,\eta):D_\alpha\to D_{\eta\alpha}$ on an element $a\in
D_\alpha$ by $\xi^*(a)$, resp. $\eta_*(a)$ or as well by $a\xi$,
resp. $\eta a$.  It is clear that $$(\xi ,\eta
)=(\xi,\id_j)(\id_i,\eta ) =(\id_i,\eta)(\xi,\id_j),$$ i.~e. in
$D_{\eta\alpha\xi}$ one has $\xi^*\eta_*(a)=\eta_*\xi^*(a)$ for any $a\in D_\alpha$.
\subsection{Baues-Wirsching cohomology}

Let $D$ be a natural system of abelian  groups on a small category $\bI$.  Following Baues
and Wirsching \cite{BW} we define the cochain complex
$C^*(\bI;D)$ by
$$
C^0(\bI;D)=\prod_{i\in\ob(\bI)}D_i
$$
and
$$
C^n(\bI;D)=\prod_{i_0\xot{\i_1}\cdots\xot{\i_n}i_n}D_{\i_1\cdots
\i_n}, \quad n>0$$
The boundary operator is defined by $d=\sum_{i=0}^n (-1)^i\dd^i$, where the coface  operators $\dd^m:C^n(\bI;D)\to C^{n+1}(\bI;D)$ are
defined as follows. If $m=0$ one puts:
$$(\dd^0(f))(i_0\xot{\i_1}\cdots\xot{\i_{n+1}}i_{n+1})
   ={\i_1}_*f(i_1\xot{\i_2}\cdots\xot{\i_{n+1}}i_{n+1}),$$
For $0<m<n$ one puts
$$(\dd^m(f))(i_0\xot{\i_1}\cdots\xot{\i_{n+1}}i_{n+1})
   =f(i_0\xot{\i_1}\cdots\xot \i_{m-1}\xot{\i_m
\i_{m+1}}i_{\nu+1}\xot{}\cdots
    \xot{\i_{n+1}}i_{n+1})$$
and finally for $m=n$ one puts
$$(\dd^n(f))(i_0\xot{\i_1}\cdots\xot{\i_{n+1}}i_{n+1})
   =(\i_{n+1})^*f(i_0\xot{\i_1}\cdots\xot{\i_n}i_n)$$
for $n>0$. 

\begin{De} Let $D$ be a natural system of abelian groups on a small category $\bI$. The
 \emph{cohomology $H^*(\bI;D)$ of $\bI$ with coefficients in} $D$ is
  defined as the cohomotopy of the cochain complex $C^*(\bI;D)$.
\end{De}

%The cohomology of small categories has obvious functorial properties. For example, for fixed $\bI$, the groups $H^n(\bI,F)$ form a covariant functor on $F$ and any short exact sequence $0\to F_1\to F\to F_2\to 0$ gives rise to the long exact sequence of cohomology groups.

We will now discuss functorial properties with respect of the first variable. Let $q:{\bf C}\to \bI$ be a functor and $D$ be a natural system of abelian groups on $\bI$. Then we have a natural system $D_q$ on $\bf C$, given by $\alpha\mapsto D_{q(\alpha)}$, where $\alpha$ is a morphism of $\bf C$. In this notation there is a cochain map $q^*:C^*(\bI,D)\to C^*({\bf C},D_q)$ given by 
$$q^*(f)(c_0\xot{\alpha_1}\cdots\xot{\alpha_{n+1}}c_{n+1})=f(q(c_0)\xot{q(\alpha_1)}\cdots\xot{q(\alpha_{n+1})}q(c_{n+1}))$$
In particular we have induced map in cohomology
$$q^*:H^n(\bI,D)\to H^n({\bf C},D_q),\ n\geq0.$$
{\bf Of the special interests is the case, when $\bf C$ is a subcategory of $\bI$ and $q$ is the inclusion. In this case insteate $D_q$ we write simply $D$ and  the above map in cohomology is called the \emph{restriction homomorphism} and is denoted by $Res:H^*(\bI,D)\to H^*({\bf C},D)$.}

\subsection{Functors and bifunctors as natural systems}. There are functors
$$\bF\bI\xto{q} \bI^{op}\times \bI$$
and $$p_1: \bI^{op}\times \bI\to \bI^{op}, \quad p_2: \bI^{op}\times \bI\to \bI$$
given respectively by
$$q(\alpha:i\to j)=(i,j), \ p_1(i,j)=i, \ p_2(i,j)=j.$$
Thus any bifunctor $B: \bI^{op}\times \bI\to\ab$, resp. any covariant functor $F:\bI\to \ab$ or any contravariant functor $G:\bI^{op}\to \ab$, gives rise to natural systems on $\bI$ given by $D(\alpha:i\to j)=B(i,j)$ respectively $D(\alpha:i\to j)=F(j)$ or $D(\alpha:i\to j)=G(i)$. In what
follows we will consider the functors and bifunctors as natural systems in this way. Thus there are well-defined cochain complexes $C^*(\bI,B)$, $C^*(\bI,F)$, $C^*(\bI,G)$ and cohomologies $H^*(\bI,B)$, $H^*(\bI,F)$, $H^*(\bI,G)$.
The groups $H^*(\bI,F)$ are quite classical and coincide with right derived functors of the limit, studied for example in \cite{laudal}.

\subsection{The case of groupoids}  Let $\bG$ be  a small groupoid (for example, it can be a group, considered as a one object category). There is a functor
$\kappa:\bG\to \bF\bG$ which is given on objects by $\kappa(g)=(g\xto{\id_g} g)$. The functor $\kappa$ takes a morphism $\eta:g\to h$ of $\bG$ to the morphism $(\eta^{-1},\eta):\id_g\to \id_h$. In other words, one has a commutative diagram 
$$
\xymatrix{
g\ar[r]^\eta&h\\
g\ar[u]^{\id}&h\ar[l]^{\eta^{-1}}\ar[u]_\id }
$$

\begin{Le}\label{nsgpd} The functor $\kappa$ is an equivalence of categories.
\end{Le}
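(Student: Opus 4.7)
The plan is to show that $\kappa$ is fully faithful and essentially surjective, which gives an equivalence of categories.

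First I would verify essential surjectivity. An arbitrary object of $\bF\bG$ is a morphism $\alpha: i\to j$ in $\bG$. Since $\bG$ is a groupoid, $\alpha$ is invertible, so the pair $(\id_i,\alpha)$ is a morphism $\id_i\to\alpha$ in $\bF\bG$: one checks $\alpha=\alpha\circ\id_i\circ\id_i$. Because both components $\id_i$ and $\alpha$ are isomorphisms in $\bG$, the remark after the definition of $\bF\bI$ gives that $(\id_i,\alpha)$ is an isomorphism in $\bF\bG$. Hence every object of $\bF\bG$ is isomorphic to some $\kappa(i)=\id_i$.

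Next I would show $\kappa$ is fully faithful. A morphism $\kappa(g)\to\kappa(h)$ is a pair $(\xi,\eta)$ with $\xi:h\to g$, $\eta:g\to h$ and $\id_h=\eta\circ\id_g\circ\xi=\eta\xi$. Since $\bG$ is a groupoid, $\eta$ is invertible and the relation $\eta\xi=\id_h$ forces $\xi=\eta^{-1}$. Thus the pair is entirely determined by $\eta\in\hom_\bG(g,h)$, and conversely every such $\eta$ yields the morphism $(\eta^{-1},\eta)$, which is precisely $\kappa(\eta)$. This gives a bijection $\hom_\bG(g,h)\xrightarrow{\ \sim\ }\hom_{\bF\bG}(\kappa(g),\kappa(h))$ realised by $\kappa$.

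No step here is difficult; the only point requiring minor care is the essential surjectivity, where one must remember to use the groupoid assumption to produce an isomorphism in $\bF\bG$ rather than merely a morphism. Functoriality of $\kappa$ (i.e.\ that $\kappa(\eta'\eta)=\kappa(\eta')\kappa(\eta)$) follows immediately from the composition rule $(\xi',\eta')(\xi,\eta)=(\xi\xi',\eta'\eta)$ recalled in the excerpt, since $(\eta'\eta)^{-1}=\eta^{-1}\eta'^{-1}$. Putting the three verifications together yields the desired equivalence.
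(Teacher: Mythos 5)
Your proof is correct, but it takes a different route from the paper. You verify the two standard criteria directly: essential surjectivity (every object $\alpha:i\to j$ of $\bF\bG$ is isomorphic to $\kappa(i)=\id_i$ via $(\id_i,\alpha)$, which is invertible because both components are isomorphisms in the groupoid $\bG$) and full faithfulness (the factorization condition $\id_h=\eta\circ\id_g\circ\xi$ forces $\xi=\eta^{-1}$, so morphisms $\kappa(g)\to\kappa(h)$ are exactly the pairs $(\eta^{-1},\eta)=\kappa(\eta)$). The paper instead exhibits an explicit quasi-inverse $\iota:\bF\bG\to\bG$ sending $\alpha:x\to y$ to its codomain $y$ and $(\xi,\eta)$ to $\eta$, checks $\iota\circ\kappa=\id_{\bG}$ on the nose, and writes down the natural isomorphism $\theta(\alpha)=(\alpha,\id_y):\kappa\iota(\alpha)\to\alpha$ (note the paper retracts onto the codomain where you retract onto the domain; both work symmetrically). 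The two arguments are of essentially equal length, and your computation of $\hom_{\bF\bG}(\kappa(g),\kappa(h))$ is arguably the more illuminating point, since it makes explicit why $\kappa$ is injective and surjective on hom-sets rather than leaving it implicit in the naturality check for $\theta$. What the paper's version buys is the concrete functor $\iota$ itself: it is this explicit quasi-inverse that underlies the subsequent identification of natural systems on a groupoid with ordinary (co)variant functors, e.g.\ the formula $xa=x_*(x^{-1})^*a$ recovering the $G$-module structure. Your approach establishes the equivalence but would require unwinding an abstract inverse to get those later formulas.
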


\begin{proof} Define the functor $\iota: \bF\bG\to \bG$ on objects by $$\iota(x\xto{\alpha} y)=y=codomain (\alpha)$$
and on morphisms by $\iota(\xi,\eta)=\eta$. Here $(\xi,\eta)$ is a morphism $\alpha\to \beta$ in the category $\bF\bG$, thus one has a commutative diagram
$$
\xymatrix{
y\ar[r]^\eta&v\\
x\ar[u]^\alpha&u\ar[l]^\xi\ar[u]_\beta }
$$ 

Then obviously $\iota\circ \kappa=\id_{\bG}$. Moreover there is a natural isomorphism of functors $\theta:\kappa\circ \iota\to \id_{\bF\bG}$. Here $\theta(x\xto{\alpha} y)=(\alpha,\id_y)$. This follows from the following commutative diagram
$$
\xymatrix{
y\ar[r]^\id&y\\
y\ar[u]^\id&x\ar[l]^\alpha\ar[u]_\alpha }
$$ 

\end{proof}

Let $\bG$ be  a small groupoid. Then there is an isomorphism of categories $\bG^{op} \to \bG$ which is the identity on objects and takes a morphism to its inverse. Hence the category of contravariant functors from $\bG$ to $\ab$ is isomorphic to the category of   covariant functors from $\bG$ to $\ab$ and both categories are equivelent to the category of natural systems on $\bG$ thanks to Lemma \ref{nsgpd}.

Especially nice is the case when $\bG$ is a one object category corresponding to a group $G$. In this case, a covariant functor $\bG\to\ab$ is nothing but a left $G$-module. Any such $A$ gives rise to a natural system $D$ on $\bG$, where $D_x=A$ for all $x\in G$. Moreover, for any $a\in A$, $x,y,z\in G$, the maps
$$y_*:D_{x}=A\to A=D_{xy}, \quad z^*:D_x=A\to A=D_{zx}$$
are given by $y_*(a)=ya$ and $z^*(a)=a$ respectively.
Conversely, if $D$ is a natural system on $\bG$ then one obtains a $G$ module $A$ as follows: $A=D_1$ (here $1$ is the unit of $G$ considered as the identity morphism of $\bG$). The left action of $G$ on $A$ is given by
$$xa=x_*{(x^{-1})}^*a$$
for $a\in A$ and $x\in G$. Thus, the category of natural systems on $\bG$ is equivalent to the category of left $G$-modules. By comparing the Baues-Wirsching cochain complex with the classical complex used in group cohomology, we see that $H^*(\bG,D)=H^*(G,A)$, where $A=D_1$ with the above actions. 

Actually this isomorphism can be generalised to groupoids. This follows from the fact that any groupoid is equivalent to a groupoid which is a disjoint union of one object groupoids (i.e. groups considered a one object categories). Denote by $\Lambda$ the set of connected components of $\bG$ and for each $\lambda\in \Lambda$ choose an object $x_\lambda$ in the connected component corresponding to $\lambda$. Denote by $G_\lambda$ the group of automorphisms of $x_\lambda$ then \begin{equation}\label{grcoh}H^*(\bG,D)\cong \prod_\lambda H^*(G_\lambda, A_\lambda)\end{equation}
where $A_\lambda$ is the evaluation of $D$ on the morphism ${\id}_{x_\lambda}:x_\lambda \to x_\lambda$.

\subsection{Track categories}  
Recall that a track category (known also as a groupoid enriched category) ${\bf T}$ has objects $A,B,C, \cdots $ and for any two objects $A$ and $B$ a small groupoid ${\bf T}(A,B)$ is given, called the hom-{\it groupoid} of ${\bf T}$.
Moreover, for any triple of objects $A,B,C$ we have a composition functor
$${\bf T}(B,C)\times {\bf T}(A,B) \to {\bf T}(A,C).$$
For any object $A$ the identity $1_A$ is given which is an object of ${\bf T}(A,A)$. These data must satisfy the usual equations of associativity and identity. Objects of the
category ${\bf T}(A,B)$ are called $1$-{\it morphisms} in ${\bf T}$, while morphisms
from ${\bf T}(A,B)$ are called $2$-{\it morphisms} or \emph{tracks}. $1$-morphisms are denoted by $f,g$
etc, while $2$-morphisms are denoted by $\alpha,\bb$ etc. If $f$ is a $1$-morphism
from $A$ to $B$ we write $f:A\to B$, while for a $2$-morphism $\alpha$ from
$f$ to $g$ we write $\alpha:f\then g$. For $1$-morphisms we use multiplicative notation, while for $2$-morphisms we use additive
notation. So we write $gf:A\to C$ for the composite  of $1$-morphisms  $f:A\to B$ and $g:B\to C$. We write $\bb+\alpha:f\then  h$ for
the composite of $2$-morphisms $\alpha:f\then  g$ and $\bb:g\then  h$.
Similarly, by $0_f$ or simply by $0$ we denote the identity morphism of the object $f$ in the category  ${\bf T}(A,B)$.

A $1$-morphism $g:B\to C$ induces the functors
$$g_*:{\bf T} (A,B)\to {\bf T}(A,C), \ \ f\mapsto gf, \ \ \alpha\mapsto g_*\alpha,$$
$$g^*:{\bf T} (C,D)\to {\bf T}(B,D), \ \ h\mapsto hg, \ \ \bb\mapsto g^*\bb.$$
These functors are restrictions of the composition functors. It follows from the definition that the following relations hold:
$$(\alpha +\bb)+\gamma=\alpha +(\bb+\gamma),\leqno {\rm TR \ 1}$$
$$\alpha +0=\alpha=0+\alpha,   \leqno {\rm TR \ 2}$$
$$f^*(\alpha +\bb)=f^*(\alpha) +f^*(\bb),\leqno {\rm TR \ 3} $$
$$g_*(\alpha +\bb)=g_*(\alpha) +g_*(\bb),\leqno {\rm TR \ 4}$$
$$f^*(0)=0=g_*(0),\leqno {\rm TR \ 5}$$
$$(ff_1)^*=f_1^*f^*, \ 1^*=1,\leqno {\rm TR \ 6}$$
$$(gg_1)_*=g_*g_{1*}, \ 1_*=1,\leqno {\rm TR \ 7}$$
$$g_*f^*=f^*g_*,\leqno {\rm TR \ 8}$$
$$f_1^*(\beta)+g_*(\alpha )=g_{1*}(\alpha)+f^*(\beta). \leqno {\rm TR \ 9}$$
The following diagram explains the 1-morphisms and $2$-morphisms in TR 9:
$$\xymatrix{A\ar@/^/[r]^{f} \ar@/_/[r]_{f_1}&B\ar@/^/[r]^{g}\ar@/_/[r]_{g_1} & C}, \ \ \alpha:f\then f_1, \ \ \beta:g \then g_1. $$
The equality TR 9 holds in ${\bf T}(gf,g_1f_1)$. The common value in TR 9 is denoted by $\beta * \alpha$ and is called the \emph{Godement product}.

A basic example of a track category is $\bf Cat$, the $2$-category of small categories. The objects of $\bf Cat$ are small categories, $1$-morphisms are functors and $2$-morphisms are natural isomorphisms. It has several interesting $2$-subcategories, for example the track category $\bf Gpd$ of groupoids, functors and their natural transformations.

Let us return to general track categories.

There are several categories associated to a track category ${\bf T}$.
The most important for us is the category ${\bf T} _0$, which has the same objects as ${\bf T}$, and the morphisms in ${\bf T}_0$ are the $1$-morphisms of ${\bf T}$. 

Another category which can be associated to ${\bf T}$ is the category ${\bf T}_1$. It has the same objects as ${\bf T}$. The morphisms $A\to B$ in ${\bf T}_1$ are triples $(f,f_1,\alpha)$ where $f,f_1:A\to B$ are $1$-morphisms in ${\bf T}$ and $\alpha:f\then f_1$ is a $2$-morphism in $\bf T$. The composition in ${\bf T}_1$ is defined by $$(g,g_1,\beta)\circ (f,f_1,\alpha)=(gf,g_1f_1,\beta*\alpha).$$
%where
%$$\alpha*\alpha_1=g^*(\alpha _1)+f_{1*}(\alpha )= g_{1*}(\alpha)+f^*(\alpha _1).$$
One then has the source and target functions
$${\bf T}_0\buildrel s\over \longleftarrow {\bf T}_1 \buildrel t\over
\longrightarrow  {\bf T}_0,$$ where both functors are identity on objects and on morphisms are given by $s(f,f_1,\alpha)=f$ and
$t(f,f_1,\alpha)=f_1$. Sometimes we also write
$${\bf T}_1 \rightrightarrows {\bf T}_0$$ instead of a track category ${\bf T}$.

The \emph{homotopy category} ${\bf T}_\sim$ of  a track category $\bf T$ is the category whose objects are the same as for $\bf T$, while morphisms are homotopy classes of $1$-arrows of $\bf T$. Recall that two  $1$-arrows $f,g:A\to B$ are \emph{homotopic} if there is a track $\alpha:f\then g$.

A \emph{strict $2$-functor} $F:{\bf T}\to{\bf T}'$ from a track category ${\bf T}$ to a
track category ${\bf T}'$  assigns to each $A\in\ob{\bf T}$ an object
$F(A)\in\ob({\bf T} ')$, to each 1-morphism $f:A\to B$ in ${\bf T}$ -- a 1-morphism
$F(f):F(A)\to F(B)$ in ${\bf T}'$, and to each 2-morphism $\alpha:f\then g$ for
$f,g:A\to B$, a 2-morphism $F(\alpha):F(f)\then F(g)$ in a functorial
way, i.~e. so that one gets functors
$$F_{A,B}:{\bf T}(A,B)\to {\bf T} '\left(F(A),F(B)\right).$$ Moreover
these assignments are compatible with identities and composition, or
equivalently induce a functor ${\bf T}_1\to{{\bf T}'}_1$, that is,
$F(1_A)=1_{F(A)}$ for $A\in\ob({\bf T})$, $F(fg)=F(f)F(g)$,
$F(\alpha*\beta)=F(\alpha)*F(\beta)$ and $F(0)=0$.

We also need a weaker version which is called a \emph{$2$-functor}. We will need it only in the case when the source category is an ordinary category, considered as a track category (where the only $2$-morphisms are $0:f\then f$).

\begin{De} \label{2fun} Let ${\bf T}$ be a $2$-category and let $\bI$ be a category. A $2$-functor $F:\bI\dashrightarrow {\bf T}$ consists of the following data:

\smallskip
-- an object $F(i)$ for each object $i\in\bI$,

\smallskip
-- a $1$-morphism $F(\alpha):F(i)\to F(j)$ for each morphism $\alpha:i\to j$ in $\bI$,

\smallskip
-- a track $F(\alpha,\beta):F(\alpha) F(\beta)\then F(\alpha\beta)$ for all composable arrows $$\xymatrix{i\ar[r]^{\beta}&j\ar[r]^{\alpha}&k}$$ of the category $\bI$.

\smallskip
\noindent One requires that the following conditions hold:

{\rm (i)} $F(\alpha,\id)=0$ and $F(\id,\alpha)=0$,

{\rm (ii)} for all composable arrows
$$\xymatrix{i\ar[r]^{\gamma}&j\ar[r]^{\beta}&k\ar[r]^{\alpha}&l}$$
of the category $\bI$ one has the following equality in ${\bf T}(F(\alpha\beta\gamma), F(\alpha) F(\beta) F(\gamma))$:

\begin{equation}
\label{pseudofunctor}
F(\alpha,\beta\gamma)+F(\alpha)_*f(\beta,\gamma)=F(\alpha\beta,\gamma)+F(\gamma)^*F(\alpha,\beta).
%\tag{*}
\end{equation}
 %% $$A(\gamma)_*A(\bb,\al)+A(\gamma,\bb\al)=A(\al)^*A(\gamma,\bb)+A(\gamma\bb,\al).$$
\end{De}

\subsection{Track categories and natural systems} 
Recall that ${\bf T}_0$ denotes the underlying category of a track category ${\bf T}$. For any morphism $f:A\to B$ of ${\bf T}_0$ we let $\Aut(f)$ be the collection of all automorphisms of $f$ in the category ${\bf T}(A,B)$. Thus, this is the collection of all $2$-morphisms $\alpha:f\then f$. It follows from TR 1  and TR 2 that $\Aut(f)$ is a group. Moreover, for any morphism $g:B\to C$ of the category ${\bf T}_0$, we have maps $g_*:\Aut(f)\to \Aut(gf)$ and $f^*:\Aut(g)\to  \Aut(gf)$, which are group homomorphisms thanks to TR 3 -- TR 5. Moreover, in this way one obtains a natural system $\Aut$ of groups on ${\bf T}_0$. This follows from the identities TR 6  --  TR 8. The natural system $\Aut$ is sometimes denoted by $\Aut^{\bf T}$ in order to explicitly show the dependence on the track category $\bf T$.

\subsection{Abelian track categories, linear track extensions and the global Toda class} Recall that a groupoid $\bG$ is \emph{abelian} if for any object $x$ of $\bG$ the group of authomorphisms of $x$ is abelian. 

A track category ${\bf T}$ is an {\it abelian track category} if $\bf T(A,B)$ is an abelian groupoid for all $A,B\in\ob \bf T$. In this case $\Aut^{\bf T}$ is a natural system of abelian groups.

\begin{De} \cite{BJ} Let $D$ be a natural system of abelian groups on a category ${\bf C}$. A \emph{linear track extension} of $\bf C$ by $D$, denoted by
$$D + \xto{\sigma} {\bf T}_1 \rightrightarrows {\bf T}_0\xto{q} {\bf C},$$
is a track category ${\bf T}$ equipped with a functor $q:{\bf T}_0\to {\bf C}$ such that $q$ induces an isomorphism of categories ${\bf T}_\sim\to {\bf C}$, and an isomorphism $\sigma: D_q\to \Aut^{\bf T}$, where $D_q$ is a natural system of abelian groups on ${\bf T}_0$ given by $f\mapsto D_{q(f)}$. Thus, such a linear extension consists of a collection of isomorphisms of groups $\sigma_f:D_{q(f)}\to Aut(f)$ for each $1$-arrow $f:A\to B$ of ${\bf T}$, which have the following properties.
\begin{enumerate}[(i)]
\item The functor $q$ is full and the identity on objects. In addition, for $f,g:A\to B$ in ${\bf T}_0$ we have $q(f)=q(g)$ iff there exists a track $f \then g$. In other words, the functor $q$ identifies $\bf C$ with ${\bf T}_\sim$.

\item For $\alpha:f\then g$ and $\xi:D_{q(f)}=D_{q(g)}$ we have $\sigma_f(\xi)=-\alpha +\sigma_g(\xi)+\alpha$.

\item For any 1-arrows $A\xto{f} B\xto{g} C\xto{h} D$ in ${\bf T}_0$ and any $\xi\in D_{q(g)}$ one has
$$h_*\sigma_g(\xi)=\sigma_{hg}(q(h)_*\xi), \quad f^*\sigma_g(\xi)=\sigma_{gf}( q(f)^*\xi).$$
\end{enumerate}
\end{De}

Given a $2$-functor $F: {\bf B}\to {\bf I}$, we can compose with $q$ to get a functor ${\bar F}: {\bf I}\to {\bf C}$. This is called a \emph{collective character}, following the classic terminology by Teichm\"uller \cite{teich}. Conversely, we can ask whether for a given collective character ${\bar F}: {\bf I}\to {\bf C}$ there exists a lifting as a 2-functor $F:{\bf I}\to {\bf T}$ and if it exists, how to classify all such liftings.
As we will see, the answer depends on an obstruction class in third cohomology. This construction is analogous to the one by Teichm\"uller for a similar situation with linear algebras, and to that by Eilenberg-MacLane, for the study of obstructions to group extensions with non-abelian kernels. 

To state the corresponding results, we will fix some notation. If $F: {\bf I}\to {\bf T}$ is a 2-functor and ${\bar F}: {\bf I}\to {\bf C}$ is the corresponding collective character, then for any object $i$ of ${\bf I}$ we have ${\bar F}(i)=F(i)$. Thus if a given collective character ${\bar F}: {\bf I}\to {\bf C}$ has a lifting $F$, then $F(i)$ must be ${\bar F}(i)$. Thus we are searching to find a pair $(F_1,F_2)$ of functions known as a factor set or $2$-cocycle. The function $F_1$ assigns a $1$-morphism $F_1(\alpha)$ to each morphism $\alpha:i\to j$ in $\bf I$ with the property $qF_1(\alpha)={\bar F}(\alpha)$, and a track $F_2(\alpha,\beta):F(\alpha)F(\beta)\then F(\alpha\beta)$ for all composable arrows $i\xto{\beta}j\xto{\alpha} k$ such that the conditions (i) and (ii) of Definition \ref{2fun} hold. 
Two such $2$-cocycles $(F_2,F_1)$ and $(F_2',F_1')$ are equivalent if there exists a function $\varphi$, which assigns to any arrow $\alpha:i\to j$ a track $\varphi(\alpha):F_1(\alpha)\then F_1'(\alpha)$ such that for all composable arrows
$$\xymatrix{i\ar[r]^{\beta}&j\ar[r]^{\alpha}&k}$$ of the category $\bI$ one has the following commutative diagram of tracks
$$\xymatrix{F_1(\alpha)\circ F_1(\beta)\ar[rr]^{F_2(\alpha,\beta)}\ar[dr]_{F_1(\alpha)_*\varphi(\beta)}&&F_1(\alpha\beta)\ar[r]^{\varphi(\alpha\beta)}&F_1'(\alpha\beta)\\
&F_1(\alpha)\circ F_1'(\beta)\ar[rr]_{F_1'(\beta)^*\varphi(\alpha)}&&F_1'(\alpha)\circ F_1'(\beta)\ar[u]^{F_2'(\alpha,\beta)}
}.$$
In other words, one has the following equality in ${\bf T}(F_1(\alpha)\circ F_1(\beta), F_1'(\alpha)\circ F_1'(\beta))$
%$$F_2'(\alpha,\beta) = -\varphi(\alpha)F_1'(\beta)^*-F_1(\alpha)_*\varphi(\beta)+F_2(\alpha,\beta)+\varphi(\alpha\beta).$$
$$F_2'(\alpha,\beta) = \varphi(\alpha\beta)+F_2(\alpha,\beta)-F(\alpha)_*\varphi(\beta)-F_1'(\beta)^*\varphi(\alpha).$$

The set of all such factor sets up to equivalence with fixed ${\bar F}$ is denoted by $H^2({\bf I}, {\bf T})$.

Let $D + \xto{\sigma} {\bf T}_1 \rightrightarrows {\bf T}_0\xto{q} {\bf C}$ be a linear track extension of a category $\bf C$ by a natural system $D$. Then \cite{BJ} the \emph{universal Toda bracket} $\grupo{\bf T}$ is the element of $H^3({\bf C},D)$ represented by the following cocycle: choose for each morphism $ f$ of ${\bf T}_\sim$ a representative 1-arrow $s(f)$, that is $q(s(f))=f$. Furthermore, choose a track $\varphi(f,g):s(f)s(g)\then s(fg)$ for any composable pair $A\xto{g}B\xto{f} C$ of the category $\bf C$. Such pair $(s, \varphi)$ is called a \emph{section} of $q$. 
It follows that $h^*\varphi_{f,g}+\varphi_{fg,h}$ and $f_*\varphi_{g,h}+\varphi_{f,gh}$ both define tracks $s(fgh)\then s(f)s(g)s(h)$ and hence
\begin{equation}\label{deftd}
%c(f,g,h)=f_*\varphi(g,h)+ \varphi(f,gh)-\varphi(fg,h)-h^*\varphi(f,g)
f_*\varphi(g,h)+ \varphi(f,gh)-\varphi(fg,h)-h^*\varphi(f,g)
\end{equation}
determines an element in $\Aut(s(f)s(g)s(h))$. Going back via $\sigma$, this determines an element $c(f,g,h)\in D_{fgh}$. Varying $f,g,h$ one obtains a 3-dimensional cocycle, whose class in cohomology is $\grupo{{\bf T}}\in H^3({\bf C},D)$.

\begin{Le}\label{1.5.m}
\label{cohomology}
\begin{enumerate}[(i)]
\item Let $q:{\bf T}\to {\bf C}$ be the canonical strict $2$-functor, which assigns the identity to all tracks (here and elsewhere, categories are considered as track categories with trivial tracks). Then $q$ has a section as a 2-functor iff $\grupo{\bf T}=0$.
\item More generally, given a category ${\bf I}$ and a functor $\bar F:{\bf I}\to {\bf C}$, we can lift the functor $\bar F$ to ${\bf T}$ as a $2$-functor if and only if the pull-back $Obs_{\bar F} = {\bar F}^*(\grupo{{\bf I}})\in H^3({\bf I}, D_{\bar F} )$ is zero.
\item Suppose we are given a collective character $\bar F:{\bf I}\to {\bf C}$ which has  a lift as a $2$-functor $F: {\bf I}\to {\bf T}$, so the obstruction $Obs_{\bar F}$ is zero. Then the set of all such liftings of $\bar F$ up to equivalence is in one-to-one correspondence with elements in $H^2({\bf I}, D_{\bar F})$. In fact, the set of such linear track extensions up to equivalence is a torsor over $H^2({\bf I}, D_{\bar F})$.
\end{enumerate}
\end{Le}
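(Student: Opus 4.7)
The overall plan is to exploit the fact that the 3-cocycle \eqref{deftd} representing $\grupo{\bf T}$ measures exactly the failure of a section $(s,\varphi)$ of $q$ to be a strict $2$-functor: comparing with \eqref{pseudofunctor}, the pair $(s,\varphi)$ satisfies the pentagon if and only if $c \equiv 0$ on the nose. This observation reduces each of (i)--(iii) to a cocycle/coboundary manipulation inside the Baues--Wirsching complex.

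For (i), the ``only if'' direction is immediate, since any strict $2$-functor section is itself a section whose associated cocycle vanishes. For the converse, fix any section $(s,\varphi)$ with associated cocycle $c = d\psi$, $\psi \in C^2({\bf C}, D)$. Since for each composable pair $f,g$ the set of tracks $s(f)s(g) \Rightarrow s(fg)$ is a torsor over $\Aut(s(fg)) \cong D_{fg}$, we may replace $\varphi(f,g)$ by $\varphi'(f,g) := \varphi(f,g) + \sigma_{s(fg)}(\psi(f,g))$. Using axioms TR\,3--TR\,9 together with property (iii) of a linear track extension (which intertwines the action of $\sigma$ with the natural system structure on $D$), the new cocycle equals $c - d\psi = 0$, so $(s,\varphi')$ is a strict $2$-functor section.

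Part (ii) is the same argument transported along $\bar F$. Given a section $(s,\varphi)$ of $q$, set $F_1(\alpha) := s(\bar F(\alpha))$ and $F_2(\alpha,\beta) := \varphi(\bar F(\alpha), \bar F(\beta))$; the failure of $(F_1, F_2)$ to satisfy \eqref{pseudofunctor} is precisely $\bar F^*(c)$, which represents $Obs_{\bar F}$. Necessity is immediate, and sufficiency follows by writing $\bar F^*(c) = d\tilde\psi$ with $\tilde\psi \in C^2({\bf I}, D_{\bar F})$ and correcting $F_2$ by $\sigma(\tilde\psi)$ exactly as in (i). For (iii), fix a lift $F$. Any other lift $F'$ may be transported along tracks $F_1'(\alpha) \Rightarrow F_1(\alpha)$ (which exist because $qF_1 = qF_1' = \bar F$) to produce an equivalent lift with underlying $1$-morphisms $F_1$; once $F_1 = F_1'$, the difference $\theta(\alpha,\beta) := \sigma^{-1}(F_2'(\alpha,\beta) - F_2(\alpha,\beta))$ defines a cochain $\theta \in C^2({\bf I}, D_{\bar F})$. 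Subtracting the pentagon equations \eqref{pseudofunctor} for $F$ and $F'$ yields $d\theta = 0$, and an equivalence in the sense defined just above the lemma translates exactly into a $1$-cochain whose coboundary realises the change in $\theta$. Transitivity and freeness of this action being immediate, we obtain the required $H^2$-torsor structure on the set of equivalence classes of lifts.

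The principal technical obstacle is the signed verification that modifications of the form $\varphi \mapsto \varphi + \sigma(\psi)$ change the $3$-cocycle \eqref{deftd} by precisely $d\psi$, and analogously for the $2$-cocycle appearing in part (iii). This requires carefully matching the covariant/contravariant actions induced by $\sigma$ (via property (iii) of a linear track extension) with the face operators $\delta^i$ of the Baues--Wirsching differential. Once this bookkeeping identity is established, all three parts follow by the same coboundary mechanism.
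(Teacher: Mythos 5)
Your proposal is correct and follows essentially the same route as the paper: part (i) by correcting $\varphi$ with $\sigma$ applied to a $2$-cochain trivialising the Toda cocycle, part (ii) by pulling back along $\bar F$, and part (iii) by exhibiting the $H^2$-torsor structure (the paper phrases this as an action on factor sets and defers the verification to Cegarra et al., while you phrase it via a difference cocycle, but these are the same standard argument). The only caveat is the sign/placement bookkeeping you yourself flag — e.g.\ whether the correcting automorphism lives in $\Aut(s(f)s(g))$ or $\Aut(s(fg))$, which agree up to conjugation by $\varphi$ via property (ii) of a linear track extension — and the paper leaves this at the same level of informality.
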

\begin{proof}
\begin{enumerate}[(i)]
\item Let $(s,\varphi)$ be a section of $q$. For it to be a $2$-functor, the equation (\ref{pseudofunctor}) has to hold. This is equivalent to the vanishing of the $3$-cocycle $c$ defined in the equation (\ref{deftd}). Conversely, let $c$ defined in the equation (\ref{deftd}) be a coboundary, i.e. $c(f,g,h)=f_*\rho(g,h)+ \rho(f,gh)-\rho(fg,h)-h^*\rho(f,g)$ for a function $\rho$ which assigns to each composable pair of morphisms $A\xto{g}B\xto{f} C$ an element in $D_{fg}$. We have a corresponding track $\bar \rho (f,g): = \sigma_{s(f)s(g)}(\rho(f,g))\in \Aut(s(f)s(g))$. 
%So $\sigma(c)$ determines an element in $Aut(s(f)s(g)s(h))$. 
We define $\tau = \varphi - \bar\rho.$ It is clear that $\tau(f,g)$ is a track $s(f)s(g)s(h)\then s(fgh)$. So, the pair $(s,\tau)$ is also a section, for which the corresponding $3$-cocycle $c_\tau(f,g,h)$ vanishes and hence $(s,\tau)$ defines a section of $q$ which is a $2$-functor.
\item This is a formal consequence of part (i) and the pull-back construction.
\item %The isomorphism classes of linear track extensions correspond to equivalence classes of collective characters. 
We describe an action
$$H^2({\bf I}, D_{\bar F})\times H^2({\bf I}, {\bf T})\to H^2({\bf I}, {\bf T}).$$
Let $h$ be a normalised $2$-cocycle representing the class of $[h]\in H^2({\bf I}, D_{\bar F})$, and let $(F_2,F_1)$ be a factor set representing the class $[F_2,F_1]\in  H^2({\bf I}, {\bf T})$. Then the result of the action is the class represented by the factor set $(F_2',F_1)$, where 
$$F_2'(\alpha,\beta)= \sigma_{F_1(\alpha\beta)}(h(\alpha,\beta)) +F(\alpha,\beta).$$
%for each composable pair of arrows in $I$, we consider $(h+f)(\alpha,\beta):F(\alpha)F(\beta)\to F(\alpha\beta)$, the track defined by
%$$(h+f)(\alpha,\beta) = h(\alpha,\beta)+f(\alpha,\beta).$$
To see that this is well-defined, and the action is transitive follows closely to the proof of Theorem $9$ in \cite{cegarra}.
\end{enumerate}
\end{proof}

The most important example of linear track extensions arises from abelian track categories \cite{BJ}. Namely, if $\bf T$ is an abelian track category, then obviously $\Aut^{\bf T}$ is a natural system of abelian groups on ${\bf T}_0$. It was proved by Baues and Jiblazde that there exists a unique (up to isomorphism) natural system of abelian groups $D^{\bf T}$ on ${\bf T}_\sim$ and an isomorphism of natural systems $\sigma:D^{\bf T}_q\to \Aut^{\bf T}$. Hence any such track category defines an element $\grupo{{\bf T}}\in H^3({\bf T}_\sim, D^{\bf T})$.

\section{Applications to Crossed modules, the Eilenberg-MacLane obstruction class and the Teichm\"uller class}

\subsection{Crossed modules and track categories}
Recall that a crossed module is a group homomorphism  $\delta : T\to R$ together with an action of $R$ on $T$ satisfying: 
$$\delta (^rt)=r\delta (t)r^{-1} \ {\rm and} \
 ^{\delta t}s=tst^{-1}, \ r\in R, t,s\in T.$$
It follows from the definition that the image $Im (\delta)$ is a normal subgroup of $R$, and the kernel $Ker (\delta)$ is in the centre of $T$. Moreover, the action of $R$ on $T$ induces an 
action of $G$ on $M=Ker (\delta)$, where $G=Coker(\delta)$. So we have an exact sequence 
$$0\to M\to T\xto{\delta} R\xto{p} G\to 0,$$
called a \emph{crossed extension} of a group $G$ by a $G$-module $M$. Such an extension defines an element in $H^3(G,M)$, see for example \cite[Ch. IV. Section 5] {kenn}. We recall the construction of this class.

Choose a pair of maps $(s:G\to R$, $\ss:G\times G\to T)$ for which the following hold:
$$ps(x)=x, \quad s(x)s(y)=\delta(\ss(x,y))s(xy), \ x,y\in G.$$
Then the map 
\begin{equation}\label{3cycle} 
f(x,y,z)=\,^x \sigma(y,z)\ss(x,yz)\ss(xy,z)^{-1}\ss(x,y)^{-1}\end{equation}
defines a $3$-cocycle $f\in Z^3(G,M)$. The class of this cocycle in $H^3(G,M)$ is denoted by $\grupo{\delta}$.

We will show that this classical construction is a special case of the global Toda bracket construction. To this end, for a group $L$ we let $\underline{L}$ denote the one object category whose morphisms are the elements of $L$. Denote by ${\bf T}^\delta$ the following track category. It has only one object, 1-arrows are elements of $R$ and the composition is induced by the product rule in $R$. A track from $r$ to $r'$ in ${\bf T}^{\delta}$ is an element $t\in T$ such that $r'=\delta(t)r$ (observe that now we use multiplicative notation for tracks, which should not cause any complications) and we write $t:r\then r'$. Moreover, for a track $t:r\then r'$ and $x,y\in R$, considered as 1-arrows, we set $$x_*(t)=\, ^xt:xr\then xr' \quad {\rm and} \quad y^*(t)=t:ry\then r'y.$$
One easily sees that we indeed obtain a track category, with
$$({\bf T}^\delta)_0=\underline{R}, \quad ({\bf T}^\delta)_\sim=\underline{G}.$$
Furthermore, the natural system $D^{\bf T}$ is nothing but the $G$-module $M$. This easily follows from the fact that $\Aut(r)=M$ for all $r\in R$.

\begin{Le}
\label{Toda}
One has the equality
$$\grupo{{\bf T}^{\delta}}=\grupo{\delta}.$$
\end{Le}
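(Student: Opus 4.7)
My plan is to compute a cocycle representative of $\langle {\bf T}^\delta \rangle$ directly from formula (\ref{deftd}), choosing a section of $q:\underline{R}\to\underline{G}$ that matches the data used in (\ref{3cycle}), and to check that the two cocycles agree term by term.

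First I would unpack the identifications. Since $\Aut(r)=\ker\delta=M$ for every 1-arrow $r\in R$, the natural system $D^{{\bf T}^\delta}$ corresponds via (\ref{grcoh}) to the $G$-module $M$, so that $H^3(\underline{G}, D^{{\bf T}^\delta})\cong H^3(G,M)$, and the structural isomorphism $\sigma:D^{{\bf T}^\delta}_q\to \Aut^{{\bf T}^\delta}$ is, on each 1-arrow, just the identity of $M$.

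Second, take as section of $q$ exactly the data used to construct $\langle \delta\rangle$: a set-theoretic section $s:G\to R$ of $p$ with associated factor set $\sigma:G\times G\to T$ satisfying $s(x)s(y)=\delta(\sigma(x,y))\, s(xy)$. This equation says precisely that $\sigma(x,y)\in T$ defines a track $\varphi(x,y)$ in ${\bf T}^\delta$ connecting $s(x)s(y)$ and $s(xy)$, so $(s,\varphi)$ is a section of $q$ in the pseudo-functorial sense.

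Third, evaluate the Baues--Wirsching $3$-cocycle (\ref{deftd}) on this section using the dictionary specific to ${\bf T}^\delta$: composition of tracks is multiplication in $T$; the right action $z^{*}$ is trivial on the underlying element of $T$; the left action $x_{*}$ is the crossed-module action $t\mapsto{}^{s(x)}t$; and the additive inverse of a track is the group-theoretic inverse in $T$. After translating these rules into multiplication in $T$, the cocycle lands in $\Aut(s(x)s(y)s(z))=M$ as the product
$$c(x,y,z)={}^{s(x)}\sigma(y,z)\cdot\sigma(x,yz)\cdot\sigma(xy,z)^{-1}\cdot\sigma(x,y)^{-1}.$$
Since the $G$-action on $M$ is by definition induced from the $R$-action on $T$ via $p:R\to G$, this is literally the Eilenberg--MacLane cocycle (\ref{3cycle}) representing $\langle\delta\rangle$, and the equality $\langle{\bf T}^\delta\rangle=\langle\delta\rangle$ in $H^3(G,M)$ follows.

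The substantive part of the argument is pure bookkeeping: aligning the direction and composition-order conventions between the abstract additive formula (\ref{deftd}) and the explicit multiplicative formula (\ref{3cycle}). Once these are aligned, the two cocycles agree on the nose, and no further cohomological argument is needed; in particular, no coboundary correction arises because the section $(s,\varphi)$ was chosen to coincide with Eilenberg--MacLane's factor set.
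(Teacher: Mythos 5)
Your proposal is correct and follows essentially the same route as the paper: identify $H^3(\underline{G},D^{{\bf T}^\delta})$ with $H^3(G,M)$ via the isomorphism (\ref{grcoh}), then check that for the section $(s,\varphi)$ built from Eilenberg--MacLane's factor set the cocycle (\ref{deftd}) coincides with (\ref{3cycle}), using that $y^*$ acts trivially on tracks in ${\bf T}^\delta$. The paper's proof is just a terser statement of exactly this comparison, so your more explicit bookkeeping of the dictionary (composition of tracks as multiplication in $T$, $x_*$ as the crossed-module action) is a faithful expansion rather than a different argument.
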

\begin{proof} By the isomorphism (\ref{grcoh}) the cohomology  groups in questions are the same. The rest follows by comparing the equations (\ref{deftd}) and (\ref{3cycle}) and the fact that $y^*(t)=t$ for all $y\in R$, $t\in T$.
\end{proof}

\subsection{The track category related to the category of groups and the Eilenberg-MacLane obstruction class}

Recall that to given groups $\Pi$, $G$ and a group homomorphism $\eta:\Pi\to \Out(G)$, Eilenberg and MacLane associated an obstruction class ${\mathcal Obs}_{\eta}\in H^3(\Pi,{\sf Z}(G))$, where ${\sf Z}(G)$ denotes the centre of $G$. In this section we construct a class, which is independent of the groups $\Pi$ and $G$, and which restricts to all these classes, see Proposition \ref{u_cl} below. Before we state this result explicitly, let us recall some basic facts related to the class ${\mathcal Obs}_{\eta}$, following \cite{homology}.

If
$$E: \quad \quad 1\to G\xrightarrow{\varkappa} B \xrightarrow{\sigma} \Pi \to 1$$
is a group extension, the epimorphism $\sigma$ induces a homomorphism $\eta: \Pi \to \Out(G)$, where $\Out(G)$ is the group of outer automorphisms of $G$. This happens in the following way: We have an action of $B$ on $G$ via conjugation, which gives us the homomorphism $\theta: B\to \Aut(G)$. Because $\theta(\varkappa (G))\subset {\sf In}(G)$, where ${\sf In}(G)$ is the group of inner automorphisms, we have the induced homomorphism $\eta: \Pi \to \Aut(G) / {\sf In}(G) = \Out(G)$.

We say that the extension $E$ has \emph{conjugation class $\eta$}: thus $\eta$ records in which way $G$ appears as a normal subgroup of $B$. Conversely, call a pair of groups $\Pi$, $G$ together with a homomorphism $\eta: \Pi\to \Out(G)$ an \emph{abstract kernel}.

The general problem of group extensions is constructing all extensions $E$ to a given abstract kernel $(\Pi, G, \eta)$. Given such an abstract kernel however, there might not necessarily exist an extension of $\Pi$ by $G$ that induces $\eta$.

The obstruction to the existence of an extension $$1\to G\to B \to \Pi \to 1$$ that induces $\eta$ is given by a certain class ${\mathcal Obs}_\eta$ in the third cohomology group $H^3(\Pi, {\sf Z}(G))$, where ${\sf Z}(G)$ is the centre of $G$.

To each group $G$ we associate the canonical crossed module $\mu: G\to \Aut(G)$. The kernel and cokernel of $\mu$ make up the crossed extension
$$0\to {\sf Z}(G)\to G\to \Aut(G)\to \Out(G)\to 1.$$
Here, ${\sf Z}(G)$ is the kernel of $\mu$, while $\Out(G)$ is the cokernel. As outlined in the previous section, this crossed extension leads to an element $Cl_G$ in the third cohomology group $H^3(\Out(G), {\sf Z}(G))$. Now we  use the pull-back construction of the homomorpism $\eta$ to create another crossed extension of $\Pi$ via ${\sf Z}(G)$:
$$
\xymatrix{
0\ar[r]&{\sf Z}(G)\ar[r]\ar[d]_{id}&G\ar[r]\ar[d]^{id}&X\ar[r]\ar[d]&\Pi\ar[d]^\eta\ar[r]&1\\
0\ar[r]&{\sf Z}(G)\ar[r]&G\ar[r]&\Aut(G)\ar[r]&\Out(G)\ar[r]&1.}
$$
The class of this crossed extension in $H^3(\Pi,{\sf Z}(G))$ is denoted by ${\mathcal Obs}_{\eta}$. Thus we have the basic formula
\begin{equation}\label{obs=cl} {\mathcal Obs}_{\eta}=\eta^*(Cl_G),\end{equation}
where $\eta^*$ is the induced homomorphism $H^3(\Out(G),{\sf Z}(G))\to H^3(\Pi,{\sf Z}(G))$. 
%
%
%Then we have the following result.
%
%\begin{Pro}
%For every abstract kernel $(\Pi,G,\eta)$ there is an induced %homomorphism $\eta^*: H^3(\Out(G),{\sf Z}(G))\to H^3(\Pi,{\sf %Z}(G))$ which maps $Cl_G$ to the obstruction element defined %above. We have ${\mathcal Obs}_{\eta}=\eta^*(Cl_G)$.
%\end{Pro}
%
%To see this, 
%The induced homomorphism $\eta^*: H^3(\Out(G),{\sf Z}(G))\to %H^3(\Pi,{\sf Z}(G))$ gives us the equality. This result is a %paraphrasing of the theorem in \cite{homology}:

Now we are in the position to formulate the following classical result of Eilenberg and MacLane, see \cite[Ch.IV. Section 6]{kenn} or \cite[Ch. IV] {homology}.
\begin{Th}(Eilenberg-MacLane)

The abstract kernel $(\Pi,G,\eta)$ has an extension if and only if ${\mathcal Obs}_{\eta}=0$.
%one of its obstructions is the cochain identically $0$.
\end{Th}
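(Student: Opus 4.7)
The proof plan is to reduce the theorem to an application of Lemma \ref{1.5.m}(ii) applied to the track category ${\bf T}^\mu$ associated (as in the previous subsection) to the canonical crossed module $\mu: G \to \Aut(G)$. Recall that ${\bf T}^\mu$ has a single object, $({\bf T}^\mu)_0 = \underline{\Aut(G)}$, $({\bf T}^\mu)_\sim = \underline{\Out(G)}$, and natural system $D^{{\bf T}^\mu} = {\sf Z}(G)$, so by Lemma \ref{Toda} we have $\grupo{{\bf T}^\mu} = \grupo{\mu} = Cl_G \in H^3(\Out(G), {\sf Z}(G))$. The identity (\ref{obs=cl}) then reads $\mathcal{Obs}_\eta = \eta^*(\grupo{{\bf T}^\mu})$, where $\eta: \underline{\Pi} \to \underline{\Out(G)}$ is viewed as a functor of one-object categories. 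By Lemma \ref{1.5.m}(ii) applied to ${\bf I} = \underline{\Pi}$, ${\bf C} = \underline{\Out(G)}$, the pullback class $\mathcal{Obs}_\eta$ vanishes if and only if $\eta$ admits a lifting to a $2$-functor $F: \underline{\Pi} \dashrightarrow {\bf T}^\mu$.

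The main remaining step, and the technical heart of the argument, is to identify such $2$-functor liftings with group extensions $1\to G\to B\to \Pi\to 1$ inducing the abstract kernel $\eta$. Unwinding Definition \ref{2fun} for ${\bf T}^\mu$, a $2$-functor $F$ lifting $\eta$ is exactly a pair $(F_1, F_2)$ where $F_1: \Pi\to \Aut(G)$ is a set-map lifting $\eta$ (i.e.\ $[F_1(x)] = \eta(x)$ in $\Out(G)$), and $F_2: \Pi\times \Pi\to G$ is a map such that
$$F_1(x)F_1(y) = \mu(F_2(x,y))\,F_1(xy),$$
subject to normalisation $F_2(x,1) = F_2(1,y) = 1$ and the cocycle identity (\ref{pseudofunctor}), which here becomes the classical non-abelian $2$-cocycle condition
$$F_2(x,yz)\,{}^{F_1(x)}F_2(y,z) = F_2(xy,z)\,F_2(x,y).$$
This is exactly a factor set in the classical sense of Eilenberg--MacLane, so it determines a group extension $B = G\times \Pi$ with multiplication $(g,x)(g',y) = (g\,{}^{F_1(x)}g'\,F_2(x,y),\,xy)$, whose conjugation class on $G$ recovers $\eta$. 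Conversely, any extension $E$ inducing $\eta$ gives rise by choosing a set-theoretic section $s:\Pi \to B$ to such a pair $(F_1,F_2)$ by setting $F_1(x) = \theta(s(x))$ and defining $F_2(x,y)$ via the comparison $s(x)s(y) = \varkappa(F_2(x,y))\,s(xy)$.

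The main obstacle is precisely this identification of $2$-functor lifts with extensions: one must check that the cocycle conditions match and that normalisation is harmless. Both points are routine, the first amounting to direct comparison of (\ref{pseudofunctor}) with the classical identity, and the second following from the fact that any factor set is equivalent to a normalised one. Combining these steps yields: $\mathcal{Obs}_\eta = 0 \iff $ a $2$-functor lift of $\eta$ to ${\bf T}^\mu$ exists $\iff$ an extension with conjugation class $\eta$ exists, which is the theorem.
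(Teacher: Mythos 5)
Your argument is essentially correct, but note that the paper itself gives no proof of this theorem: it is quoted as a classical result of Eilenberg and MacLane with references to \cite{kenn} and \cite{homology}. What you have done is rederive it from the paper's own machinery, which is very much in the spirit of what the paper does elsewhere (Proposition \ref{u_cl} combined with Lemma \ref{1.5.m} is intended to yield exactly this kind of statement). Your reduction to Lemma \ref{1.5.m}(ii) applied to ${\bf T}^\mu$ is sound, granting the paper's formula (\ref{obs=cl}) and Lemma \ref{Toda}, and your unwinding of Definition \ref{2fun} for ${\bf T}^\mu$ does reproduce the classical notion of a factor set $(F_1,F_2)$ with $F_1:\Pi\to\Aut(G)$ lifting $\eta$ and $F_2$ satisfying the non-abelian cocycle identity, whence the crossed-product extension and, conversely, the factor set attached to a section of an extension. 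Two small caveats. First, with the paper's convention that a track $t:r\then r'$ in ${\bf T}^\delta$ means $r'=\delta(t)r$, a track $F_2(x,y):F_1(x)F_1(y)\then F_1(xy)$ is an element $t\in G$ with $F_1(xy)=\mu(t)F_1(x)F_1(y)$, so your displayed relation $F_1(x)F_1(y)=\mu(F_2(x,y))F_1(xy)$ replaces $t$ by $t^{-1}$; harmless, but worth stating, since the sign/inversion conventions must also match when comparing (\ref{pseudofunctor}) with the classical cocycle identity. Second, you lean on Lemma \ref{1.5.m}(ii), whose proof in the paper is itself only a one-line sketch, and on the bijection between extensions inducing $\eta$ and normalised factor sets, which is precisely the technical core of the classical proof; so in substance your argument is the Eilenberg--MacLane proof repackaged in track-theoretic language, with the obstruction-cocycle bookkeeping delegated to Lemma \ref{1.5.m}. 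That is a legitimate and arguably more conceptual route than the paper's bare citation, and it has the added value of verifying that the paper's class $\eta^*(Cl_G)$ really coincides with the classical obstruction.
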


Next, we construct a universal class $\grupo{\mathcal G}$, such that all obstruction classes ${\mathcal Obs}_{\eta}$ are restrictions of $\grupo{\mathcal G}$. To this end, let us consider the following track category $\mathcal G$. The objects of $\mathcal G$ are groups, the $1$-morphisms are surjective homomorphisms, and the set of $2$-morphisms between two $1$-morphisms $f,g:G\to H$ is given by $\{b\in H | f(a) = b+g(a)-b\}\subseteq H$.

Let us denote the underlying category by ${\mathcal G}_0$, and the homotopy category by ${\mathcal G}_\sim$. Thus ${\mathcal G}_0$ is the category whose objects are groups and whose morphisms are surjective group homomorphisms, while the category ${\mathcal G}_\sim$ has as objects groups, while the morphisms are conjugacy classes of surjective homomorphisms. By our description of 2-morphisms, for each morphism $f:G\to H$ the group $\Aut(f)$ is the group $${\sf Z}_f = \{b\in H | f(a)+b=b+f(a) \}.$$ Since $f$ is surjective, we see that ${\sf Z}_f$  is the centre of $H$ and hence is abelian. Therefore, we have an abelian track extension 
$$0\to D^{\mathcal G}\to {\mathcal G}_1 \rightrightarrows {\mathcal G}_0\to {\mathcal G}_\sim\to 1,$$
with the natural system $D$ being given by abelian groups
$$D^{\mathcal G}_{f:G\to H}={\sf Z}(H).$$
It follows that $D^{\mathcal G}$ in our case is a natural system induced by the functor $${\mathcal G}_\sim\to \ab, \quad H\mapsto {\sf Z}(H).$$
Thus we obtain the class $\grupo{{\mathcal G}}\in H^3({\mathcal G}_\sim, D^{\mathcal G})$, where the last group is the cohomology of the category ${\mathcal G}_\sim$ with coefficients in a functor $H\mapsto {\sf Z}(H)$.  

{\bf Remark}. Since the track category $\mathcal G$ is not small, we have to be more careful in this place. To avoid set theoretical problems, we have to fix two universes ${\mathfrak U}_1\subset {\mathfrak U}_2$. The elements of ${\mathfrak U}_1$ are called sets, while elements of ${\mathfrak U}_2$ are called classes. The  objects of our categories are classes, while 
morphisms between two fixed objects form a set.  In this framework the definition of a cochain complex of a category still makes sense, but as a cochain complex of classes, and hence, cohomologies are not sets in general, but classes. Regarding our primary interest $H^*({\mathcal G}_\sim, D^{\mathcal G})$, we conjecture that nevertheless they are sets. The same reasoning is applied to other non-small categories considered below.

Let $\eta:G\to \Out(\Pi)$ be an abstract kernel. Then $\eta$ can be considered as a functor $\underline{\eta}:\underline{G}\to {\mathcal G}_\sim$, which sends the unique object of $\underline{G}$ to $\Pi\in {\mathcal G}_\sim$. 

\begin{Pro}\label{u_cl}
For every abstract kernel  $\eta:G\to \Pi$, we have the induced homomorphism 
$$\underline{\eta}^*:H^3({\mathcal G}_\sim, D^{\mathcal G})\to H^3(\Out(G),{\sf Z}(G))$$
and the following equality 
\begin{equation}\label{obs=eta} {\mathcal Obs}_{\eta}=\underline{\eta}^*(
\grupo{\mathcal G}).\end{equation}
Moreover, if we identify the one object category $\underline{\Out(\Pi)}$ with a subcategory of ${\mathcal G}_\sim$ which has a single object $\Pi$ and morphisms are isomorphisms in ${\mathcal G}_\sim$, then we have 
\begin{equation}\label{cl=res} Cl_\Pi=Res (\grupo{{\mathcal G}})). \end{equation}
\end{Pro}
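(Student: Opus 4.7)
The plan is to prove (\ref{cl=res}) first and then deduce (\ref{obs=eta}) from it by factoring $\underline\eta$ through the subcategory and invoking the Eilenberg--MacLane formula (\ref{obs=cl}).

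For (\ref{cl=res}), I would introduce the sub track category $\mathcal{G}_\Pi$ of $\mathcal G$ whose unique object is $\Pi$, whose $1$-morphisms are the elements of $\Aut(\Pi)$ (the isomorphisms in $\mathcal{G}_\sim$ at $\Pi$ are exactly the classes of automorphisms, since a surjective endomorphism that is invertible modulo conjugation must be injective), and whose $2$-morphisms are inherited from $\mathcal G$. Unpacking the definition, a track between $f,g\in\Aut(\Pi)$ is an element $b\in\Pi$ with $f(a)=b\,g(a)\,b^{-1}$, i.e.\ $f=c_b\circ g$, where $c_b=\mu(b)$ and $\mu:\Pi\to\Aut(\Pi)$ is the canonical crossed module from Subsection~3.1. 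Matching source/target conventions, this is exactly the data of the track category ${\bf T}^\mu$, so $\mathcal{G}_\Pi\cong {\bf T}^\mu$. In particular $(\mathcal{G}_\Pi)_0=\underline{\Aut(\Pi)}$, $(\mathcal{G}_\Pi)_\sim=\underline{\Out(\Pi)}$, and via (\ref{grcoh}) the natural system $D^{\mathcal{G}_\Pi}$ reduces to the $\Out(\Pi)$-module ${\sf Z}(\Pi)$, which agrees with the coefficient module used in the definition of $Cl_\Pi$.

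The global Toda class is functorial under strict $2$-inclusions of sub track categories: restricting a section $(s,\varphi)$ of $q_{\mathcal G}$ to composable pairs of isomorphisms at $\Pi$ produces a section of $q_{\mathcal{G}_\Pi}$ whose $3$-cocycle (\ref{deftd}) is the literal restriction of the original. Hence $\grupo{\mathcal{G}_\Pi}=Res(\grupo{\mathcal G})$ in $H^3(\Out(\Pi),{\sf Z}(\Pi))$. Combining with Lemma~\ref{Toda} applied to the isomorphism $\mathcal{G}_\Pi\cong {\bf T}^\mu$ yields $\grupo{\mathcal{G}_\Pi}=\grupo{\mu}=Cl_\Pi$, establishing (\ref{cl=res}).

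For (\ref{obs=eta}), factor $\underline\eta:\underline G\to\mathcal{G}_\sim$ through the subcategory as $\underline G\xto{\underline\eta_0}\underline{\Out(\Pi)}\hookrightarrow\mathcal{G}_\sim$, where $\underline\eta_0$ corresponds under (\ref{grcoh}) to the given $\eta:G\to\Out(\Pi)$. Functoriality of Baues--Wirsching cohomology in the first variable gives $\underline\eta^*=\underline\eta_0^*\circ Res$, so by (\ref{cl=res}) we obtain $\underline\eta^*(\grupo{\mathcal G})=\underline\eta_0^*(Cl_\Pi)=\eta^*(Cl_\Pi)$, and this equals ${\mathcal Obs}_\eta$ by (\ref{obs=cl}).

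The main obstacle is convention bookkeeping: one must check that the source/target conventions for tracks in $\mathcal G$ (written additively in the definition but necessarily multiplicative for non-abelian $\Pi$) align with those of ${\bf T}^\mu$ (where $t:r\then r'$ means $r'=\mu(t)r$), and that the natural system extracted from $\mathcal{G}_\Pi$ by the Baues--Jibladze construction is honestly the $\Out(\Pi)$-module ${\sf Z}(\Pi)$ with its standard action, rather than some twist. Once these identifications are pinned down, the rest of the argument is formal.
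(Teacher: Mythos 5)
Your proposal is correct and follows essentially the same route as the paper: both reduce (\ref{obs=eta}) to (\ref{cl=res}) via the Eilenberg--MacLane formula (\ref{obs=cl}) and functoriality in the first variable, and both establish (\ref{cl=res}) by identifying the one-object track subcategory of $\mathcal G$ at $\Pi$ with the track category ${\bf T}^\delta$ of the canonical crossed module $\Pi\to\Aut(\Pi)$ and invoking Lemma \ref{Toda}. Your write-up merely makes explicit some details the paper leaves implicit (that the invertible morphisms of ${\mathcal G}_\sim$ at $\Pi$ are classes of automorphisms, the compatibility of the Toda cocycle with restriction to a sub track category, and the source/target conventions for tracks).
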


\begin{proof} Thanks to the equality (\ref{obs=cl}), the  equality (\ref{cl=res}) implies (\ref{obs=eta}). To show the equality (\ref{cl=res}), we consider the crossed module $\delta:\Pi\to \Aut(\Pi)$, where $\delta$ sends element of $\Pi$ to the inner automorphisms of $\Pi$. The rest follows from the fact that the track category ${\bf T}^\delta$ (see the previous section) is isomorphic to the one object track subcategory of $\mathcal G$  with object the group  $\Pi$, where $1$-morphisms are isomorphism of $\Pi$, while 2-morphisms are the same as in ${\mathcal G}$.
\end{proof}

%Let $G$ be a group. Let us consider the one object sub-$2$-category of $\mathcal{T}_0$, with morphisms the automorphisms of $G$. The natural system associated to this track extension is ${\sf Z}(G)$, the centre of $G$. By Lemma \ref{Toda}, the thus obtained track category can be equated with the one obtained from the crossed module, leading to the same element $Cl_G$ in third cohomology. We have the following:

\subsection{The track category related to the category of rings and the Teichm\"uller class}

Let $A$ be a ring. Denote by $\Aut(A)$ the group of ring automorphisms of $A$. Denote by ${\sf U}(A)$ the group of invertible elements of $A$. There is an obvious homomorphism of groups $\partial:{\sf U}(A)\to \Aut(A)$, which sends an invertible element $a$ to the inner automorphism $\partial_a:A\to A$, where $\partial_a(x)=axa^{-1}$. Then ${\sf U}(A)\xto{\partial} \Aut(A)$ is a crossed module and 
$$e_A:\quad  0\to {\sf U}({\sf Z}(A))\to {\sf U}(A)\xto{\partial} \Aut(A)\to  \Out(A)\to 1$$
is a crossed extension, where ${\sf Z}(A)$ is the centre of $A$ and $\Out(A)={\sf Coker}(\partial)$.
This crossed module and the corresponding element $h_{A}$ in $H^3(\Out(A),{\sf U}({\sf Z}(A)))$ play an important role in the work of Huebschmann \cite{hueb} on the Teichm\"uller class in the Galois theory of rings. 

If $A$ is varied, one obtains different classes and the relationship between these classes in unclear. We will now prove that these classes are in fact a restriction of a unique class, which is independent of the chosen ring $A$.

Let us consider the following track category $\mathcal R$. The objects of $\mathcal R$ are rings, the $1$-morphisms are surjective homomorphisms of rings, and the set of $2$-morphisms between two $1$-morphisms $f,g:A\to B$ is given by 
$$\{b\in {\sf U}(B) | f(a) = bg(a)b^{-1}\}\subseteq B.$$
Therefore, for each morphism $f:A\to B$, $\Aut(f)$ is the abelian group ${\sf Z}_f = \{b\in {\sf U}(B) | f(a)b=bf(a) \}$, which is the centre of ${\sf U}(B)$. Let us denote the underlying category by ${\mathcal R}_0$, and the homotopy category by ${\mathcal R}_\sim$. Hence we have an abelian track extension 
$$0\to D^{\mathcal R}\to {\mathcal R}_1 \rightrightarrows {\mathcal R}_0\to {\mathcal R}_\sim\to 1,$$
where the natural system $D^{\mathcal R}$ assigns the abelian group ${\sf Z}(U(B))$ to a morphism $A\to B$ in ${\mathcal R}_\sim.$ In fact, this natural system is induced by the functor 
$${\mathcal R}_\sim\to\ab, \quad B\mapsto {\sf Z}(B).$$
In this way one obtains the element in $H^3({\mathcal R}_\sim,  D^{\mathcal R})$, which is denoted by $\grupo{\mathcal R}$ and is called the \emph{global Teichm\"uller class}. 
  
For a fixed ring $A$ we can consider a track subcategory of $\mathcal R$, which has just one single object $A$ and the 1-morphisms of which are automorphisms. The tracks $f\Longrightarrow g$ are the same as in $\mathcal R$. Then this track subcategory is the track category corresponding to the crossed extension $e_A$. Hence we obtain
$$h_A=Res(\grupo{\mathcal R}).$$

\section{Applications to graded extensions of (monoidal) categories}

\subsection{Recollection on Grothendieck cofibrations}

Let $P:{\mathcal E}\to {\mathcal B}$ be a functor and $B$ be an object of ${\mathcal B}$. The \emph{fibre} ${\mathcal E}_B$ of $P$ over $B$ is the subcategory of ${\mathcal E}$ consisting of all morphisms $f$ such that $P(f)=id_B$. In particular, the objects of ${\mathcal E}_B$ are such objects $E$ of ${\mathcal E}$ that $P(E)=B$. Denote by $i_B:{\mathcal E}_B \hookrightarrow {\mathcal E}$ the inclusion functor. We say that a morphism $f:E\to F$ in the category  ${\mathcal E}$ is \emph{over} $a:A\to B$ if $P(f)=a$. This of course implies that $E\in  {\mathcal E}_A$ and $F\in {\mathcal E}_B$.

A morphism $f:E\to F$ in the category  ${\mathcal E}$ is called \emph{cocartesian} over $a=P(f):A\to B$, if each morphism $g:E\to G$ in $\mathcal E$ and each decomposition of $b=P(g)=c\circ a$ in $\mathcal B$ uniquely determines a morphism $h:F\to G$ in $\mathcal E$ over $c$ with $g=h\circ f$:
$$\xymatrix{ &&&&&&&\\
{\mathcal E}\ar[dddd]^P && &&&& G\\
&&&& E\ar[r]_f\ar[rru]^g&F\ar[ru]_h&\\
&&&&&&&\\
& && &&& C\\
{\mathcal B} &&&& A\ar[r]_a \ar[urr]^b &B\ar[ur]_c\\
&&&&&&&\\
}
$$

The functor $P:{\mathcal E}\to {\mathcal B}$ is a \emph{cofibration} if above each morphism $A=P(E)\to B$ in $\mathcal B$ there is a cocartesian morphsim $E\to F$. 

A \emph{cleavage} in a cofibration $f:E\to F$ is a choice, for each object $E$ of $\mathcal E$ and morphism $a:A=P(E)\to B$ in $\mathcal B$, of a cocartesian morphism $a_*:E\to F$ above $a$. If $P$ is equipped with a cleavage, it is said to be \emph{cloven}. The cleavage defines the functor  $a_*:{\mathcal E}_A\to {\mathcal E}_B$, which sends an object $E$ over $A$ to the codomain of the cocartesian morphism $a_*$.  In fact, 
$$A\mapsto {\mathcal E}_A,\ a\mapsto a_*$$ yields a $2$-functor ${\mathcal B}\to {\mathcal Cat}$. Conversely, having a $2$-functor   ${\mathcal B}\xto{\Psi} {\mathcal Cat}$, one can construct the category ${\mathcal E}={\mathcal B}\int \Phi$, known as the Grothendieck construction. The objects of ${\mathcal B}\int \Phi$ are pairs $(A,x)$, where $A$ is an object of $\mathcal B$ and $x$ is an object of $\Psi(A)$. A morphism $(A,x)\to (B,y)$ in ${\mathcal B}\int \Phi$ is a pair $(a,\alpha)$, where $a:A\to B$ is a morphism in $\mathcal B$, while $\alpha:\Phi(a)(x)\to y$ is a morphism in $\Psi(B)$. The composite of morphisms $(A,x)\xto{(a,\alpha)} (B,y)$ and $(B,y)\xto{(b,\beta)}$ is $(A,x)\xto{(c,\gamma)} (C,z)$, where $c=ba$, while $\gamma$ is the composite
$$c_*(x)=\Phi(ba)(x)\to \Phi(b)(\Phi(a)(x))\xto{\Phi(\alpha)(x)} \Phi(b)(y)\xto{\beta} z.$$

Consider the functor $P:{\mathcal B}\int \Phi\to {\mathcal B}$ , where
$$P(A,x)=A \quad {\rm and} \quad P(a,\alpha)=a.$$
One easily checks that the morphisms of the form $(a,id)$ are cocartesian and hence $P$ is a cofibration. Grothendieck proved that in this way one obtains a one-to -one correspondence between cofibarions over ${\mathcal B}$ (up to equivalence) and $2$-functors from ${\mathcal B}\to {\mathcal Cat}$.

\begin{Le} If ${\mathcal E}$ and ${\mathcal B}$ are groupoids and $P$ is full, then 
$P$ is a cofibration. 
\end{Le}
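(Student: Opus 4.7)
The plan is to combine fullness of $P$ with invertibility in the two groupoids: fullness supplies the candidate cocartesian lift, and invertibility reduces the universal property to a one-line manipulation. Concretely, fix an object $E\in\mathcal{E}$ and a morphism $a:A\to B$ in $\mathcal{B}$ with $A=P(E)$. First I select any object $F\in\mathcal{E}$ with $P(F)=B$; existence of such an $F$ is the only point that is not purely formal, and is typically ensured by the implicit assumption that $P$ is also surjective on objects, or else by reading the cofibration condition as requiring lifts only above morphisms whose codomain lies in the image of $P$.

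Having chosen $F$, fullness of $P$ produces a morphism $f:E\to F$ in $\mathcal{E}$ with $P(f)=a$. I claim this $f$ is cocartesian above $a$. To verify the universal property, take any $g:E\to G$ in $\mathcal{E}$ and any factorization $P(g)=c\circ a$ in $\mathcal{B}$. Because $\mathcal{E}$ is a groupoid, $f$ has an inverse $f^{-1}:F\to E$, so I set $h:=g\circ f^{-1}:F\to G$. By construction $h\circ f=g$, and since $\mathcal{B}$ is also a groupoid (so that $a^{-1}$ exists), applying $P$ gives $P(h)=P(g)\circ P(f)^{-1}=(c\circ a)\circ a^{-1}=c$. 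Uniqueness of $h$ is immediate: if $h'\circ f=g$, then right-cancelling $f$, which is legal because $f$ is invertible in the groupoid $\mathcal{E}$, forces $h'=g\circ f^{-1}=h$.

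The only real obstacle is the selection of $F$ over $B$; once it is in hand, both groupoid hypotheses collapse the cocartesianness check into a single manipulation with $f^{-1}$ and $a^{-1}$, and no further subtlety arises. This matches the usual slogan that for functors between groupoids, every lift of a morphism is automatically cocartesian.
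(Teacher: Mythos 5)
Your argument is essentially the paper's: its entire proof is the one-line observation that any isomorphism is cocartesian, which is exactly your computation with $h=g\circ f^{-1}$, with fullness supplying the lift $f$ of $a$. Your caveat about first needing an object $F$ with $P(F)=B$ is a legitimate point that the paper's proof silently elides (it holds in the intended application, where $P$ is surjective on objects), so your more explicit treatment is correct and matches the intended route.
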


\begin{proof} The result follows from the fact that any isomorphism is cocartesian. 
\end{proof}

%Assume now there is given a $2$-functor ${\mathcal B}\to {\mathcal Mon.Cat}$ to the $2$-category of monoidal categories, monoidal functors and monoidal transformation. In this case the coresponding cofibration $ {\mathcal B}\int \Phi\to {\mathcal B}$ has a ${\mathcal B}$-monoidal category in the following sense.
%
%In order to formulate this notations, we consider the pull-back
%$$\xymatrix{{\mathcal E}\times_{\mathcal B} {\mathcal E}\ar[r]\ar[d] & {\mathcal E}\ar[d]^P\\
%{\mathcal E}\ar[r]_P& {\mathcal B}
%}$$ 
%Thus objects of ${\mathcal E}\times_{\mathcal B} {\mathcal E}$ are pairs $(E,F)$, where $E$ and $F$ are objects in ${\mathcal E}$ such that $P(E)=P(F)$. Moreover,  a morphism $(E,F)\to (E',F')$ is a piar of morphism $f:E\to E'$ and $g:F\to F'$ in $\mathcal E$ such that $P(f)=P(g)$.
 
%A cofibration ${\mathcal E}\xto{P} \mathcal B$ is ${\mathcal B}$-monoidal category, if there is given a bifunctor
%$$\otimes:{\mathcal E}\times_{\mathcal B} {\mathcal E}\to {\mathcal E}$$
%with properties

\subsection{The class of Cegarra-Garz\'on-Grandjean}

In \cite{cegarra}, the authors define an extension of a category $\mathcal{C}$ by a group $G$. In the case when $\mathcal C$ is the category $\underline{\Pi}$ associated to a group $\Pi$ one recovers the classical theory of group extensions.

A stable $G$-grading on a category $\mathcal{D}$ is a functor $g:\mathcal{D}\to \underline{G}$ such that for every object $A\in \mathcal{D}$ and $x\in G$, there is an isomorphism $\kappa$ in $\mathcal{D}$ with source $A$ and such that $g(\kappa) = x$. We refer to $g(\kappa)$ as the grade of $\kappa$. Then we can define the category $Ker(\mathcal{D})$ as the subcategory consisting of all morphisms of grade $1$.

A \emph{$G$-graded extension} \cite{cegarra} of a category $\mathcal{C}$ is a stably $G$-graded category whose kernel is isomorphic to $\mathcal{C}$.

In this setting, by a factor set, or $2$-cocycle on $G$ with coefficients in $\mathcal{C}$, we shall mean a $2$-functor $\underline{G} \to \emph{Cat}$, the category of small categories, that associates the category $\mathcal{C}$ to the unique object of $\underline{G}$. This is a special case of the Grothendieck construction where $\underline{G}$ is the one-object category associated to the group $G$, rather than a general (small) category.

The centre of $\mathcal{C}$, ${\sf Z}(\mathcal{C})$, is defined as the set of all natural transformations $u:id_{\mathcal{C}}\to id_{\mathcal{C}}$, where $id_{\mathcal{C}}$ is the identity functor, and ${\sf Z}(\mathcal{C})^*$ denotes the abelian group of the units in ${\sf Z}(\mathcal{C})$, that is, the abelian group of all natural isomorphisms of $id_\mathcal{C}$ with itself. The group of outer autoequivalences of $\mathcal{C}$, $\Out(\mathcal{C})$, is the set of isomorphism classes of autoequivalences of $\mathcal{C}$ with the multiplication induced by the composition of autoequivalences.

In this setting, we also have a theory of obstructions, and a collective character $\Phi:G\to \Out(\mathcal{C})$ for such an extension plays a similar role to an abstract kernel. To it, we can associate an element $k(\Phi)\in H^3_\Phi(G, {\sf Z}(\mathcal{C})^*)$, called the \emph{Teichm{\"u}ller class}, which is constructed analogously to a classic construction by Teichm{\"u}ller for a similar situation with linear algebras, and to that by Eilenberg--Mac Lane, for the study of obstructions to group extensions with non-abelian kernels considered above.

\begin{Th}
\cite{cegarra} A collective character $\Phi:G\to \Out(\mathcal{C})$ is realizable if and only if its
Teichm{\"u}ller obstruction class $k(\Phi)\in H^3_\Phi(G, {\sf Z}(\mathcal{C})^*)$ vanishes.
\end{Th}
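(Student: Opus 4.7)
The plan is to realize the Teichm\"uller class $k(\Phi)$ as the pullback of a global Toda class associated to a suitable abelian track category, in exact analogy with Proposition \ref{u_cl} (for groups) and the construction of $\grupo{\mathcal R}$ (for rings). Once this identification is in place, the theorem reduces to a direct application of Lemma \ref{cohomology}(ii).

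First I would introduce a track category $\mathcal A$ whose objects are small categories, whose $1$-morphisms are autoequivalences (or a suitable larger class of functors, chosen so that the automorphism natural system behaves as below), and whose $2$-morphisms $F\then F'$ are natural isomorphisms. The definition is arranged so that for every $1$-morphism $F:{\mathcal C}\to{\mathcal D}$ the group $\Aut(F)$ is canonically identified with ${\sf Z}({\mathcal D})^*$; this makes $\mathcal A$ an abelian track category. The Baues--Jibladze machinery then furnishes a natural system $D^{\mathcal A}$ on ${\mathcal A}_\sim$ (induced by ${\mathcal C}\mapsto {\sf Z}({\mathcal C})^*$) together with a global Toda class $\grupo{\mathcal A}\in H^3({\mathcal A}_\sim, D^{\mathcal A})$.

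Second, I would interpret a collective character $\Phi:G\to \Out({\mathcal C})$ as a functor $\underline{\Phi}:\underline{G}\to {\mathcal A}_\sim$ landing in the one-object subcategory on $\mathcal C$. By Lemma \ref{cohomology}(ii), the pulled-back class $\underline{\Phi}^*(\grupo{\mathcal A})\in H^3(G,{\sf Z}({\mathcal C})^*)$ vanishes if and only if $\underline{\Phi}$ lifts to a $2$-functor $\underline{G}\to{\mathcal A}$. Such a lift amounts to a pair $(F_1,F_2)$ as in Definition \ref{2fun}: an assignment $g\mapsto F_1(g)$ of an autoequivalence in the class $\Phi(g)$, together with a coherent family of natural isomorphisms $F_2(g,h):F_1(g)F_1(h)\then F_1(gh)$ satisfying the cocycle condition (\ref{pseudofunctor}). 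This is precisely the factor set data of \cite{cegarra}, and feeding it into the Grothendieck construction produces a $G$-graded extension of $\mathcal C$ realizing $\Phi$; conversely, any realization yields such a factor set by choosing representatives.

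Third, I would identify $\underline{\Phi}^*(\grupo{\mathcal A})$ with the Teichm\"uller class $k(\Phi)$. By naturality it suffices, as in Proposition \ref{u_cl}, to show that $Res(\grupo{\mathcal A})\in H^3(\Out({\mathcal C}),{\sf Z}({\mathcal C})^*)$ coincides with the class attached to the crossed module ${\sf Z}({\mathcal C})^*\hookrightarrow\Aut({\mathcal C})\twoheadrightarrow\Out({\mathcal C})$; this in turn is a cocycle-level comparison of (\ref{deftd}) with the defining formula for $k(\Phi)$ in \cite{cegarra}, entirely analogous to the proof of Lemma \ref{Toda}.

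The principal obstacle is twofold. Set-theoretically, $\mathcal A$ is not small, so the universe conventions of the remark following Proposition \ref{u_cl} must be enforced throughout, and one needs that the resulting $H^3$ is genuinely a set (or work with it as a class). More substantively, the class of $1$-morphisms in $\mathcal A$ has to be chosen carefully so that $\Aut(F)={\sf Z}({\mathcal D})^*$ holds uniformly and so that $2$-functorial lifts correspond bijectively to the factor sets of \cite{cegarra}; aligning these conventions (including normalization of the cocycles) is the step where the argument must be checked most carefully.
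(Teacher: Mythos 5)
Your proposal matches the paper's own treatment: the paper constructs exactly this abelian track category (small categories, equivalences as $1$-morphisms, natural isomorphisms as tracks, with $\Aut(f)\cong{\sf Z}({\mathcal D})^*$ established in Lemma \ref{niso_abe}), takes the global Toda class $\grupo{{\mathcal Cat}}$, identifies $k(\Phi)$ with $\Phi^*(r(\grupo{{\mathcal Cat}}))$, and invokes Lemma \ref{1.5.m}(ii). The obstacles you flag (universe issues and pinning down the $1$-morphisms so the automorphism natural system is the centre) are precisely the points the paper handles in its remark on universes and in Lemma \ref{niso_abe}, so your outline is essentially the same argument.
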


Let us consider the track category ${\mathcal Cat}$, the objects of which are small categories, and the $1$- and $2$-morphisms are functors and natural transformations, respectively.

\begin{Le}\label{niso_abe}
If we restrict the $1$-morphisms in the track category ${\mathcal Cat}$ to equivalences and the $2$-morphisms to natural isomorphisms, we will have defined an abelian track category.
\end{Le}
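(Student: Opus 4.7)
The plan splits into two parts: (i) verify that equivalences and natural isomorphisms really do assemble into a track subcategory of ${\mathcal Cat}$, and (ii) show that the automorphism group $\Aut(f)$ of any such 1-morphism is abelian. Part (i) is formal: equivalences are closed under composition and contain the identities, natural isomorphisms form a subgroupoid of the groupoid of natural transformations closed under horizontal composition, and so the axioms TR 1 -- TR 9 restrict verbatim from ${\mathcal Cat}$.

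The main step is (ii). For an equivalence $f:\mathcal{A}\to\mathcal{B}$ I would construct a group isomorphism
$$w:\Aut(\id_{\mathcal A})\xto{\cong}\Aut(f),\qquad w(\eta)_A=f(\eta_A),$$
given by whiskering. This is a homomorphism by functoriality of $f$; the inverse is supplied by full faithfulness of $f$ (a consequence of being an equivalence), which lets one lift each component $\alpha_A\in\Aut_{\mathcal B}(f(A))$ uniquely to an automorphism $\tilde\alpha_A$ of $A$, while naturality of $\alpha$ together with faithfulness of $f$ force $\tilde\alpha$ itself to be natural.

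Abelianness of $\Aut(\id_{\mathcal A})$ is then immediate from naturality: for $\alpha,\beta\in\Aut(\id_{\mathcal A})$ and an object $A$, the component $\alpha_A:A\to A$ is a morphism in $\mathcal A$, so the naturality square for $\beta$ applied at $\alpha_A$ reads $\beta_A\circ\alpha_A=\alpha_A\circ\beta_A$, giving commutativity component-wise. I do not expect any serious obstacle; the only delicate point is the construction of $\tilde\alpha$ from $\alpha$ and the check that it is natural, but this is nothing beyond the bijection $\mathcal{A}(A,A)\cong\mathcal{B}(f(A),f(A))$ provided by full faithfulness.
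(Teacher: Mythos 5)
Your proof is correct, but it reduces the problem differently from the paper. The paper fixes two automorphisms $\alpha,\beta\in\Aut(f)$ of an equivalence $f:\mathcal{C}\to\mathcal{D}$ and manufactures a $u\in\Aut(\id_{\mathcal{D}})$ on the \emph{target} side with $\beta=\alpha+f^*u=f^*u+\alpha$, setting $u_Y=\eta_{X,Y}-\alpha_X+\beta_X-\eta_{X,Y}$ for a chosen isomorphism $\eta_{X,Y}:f(X)\to Y$; this needs essential surjectivity to define $u_Y$ at objects $Y$ outside the image of $f$, full faithfulness to check independence of choices and naturality, and then TR~9 to see that $f^*u$ commutes with $\alpha$. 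You instead exhibit a group isomorphism $\Aut(\id_{\mathcal{A}})\cong\Aut(f)$ by whiskering on the \emph{source} side, $\eta\mapsto f_*\eta$, whose inverse uses only full faithfulness (lift each component along the bijection $\mathcal{A}(A,A)\cong\mathcal{B}(f(A),f(A))$, with naturality of the lift reflected by faithfulness), and then invoke the standard fact that the centre of a category is commutative: naturality of $\beta$ at the morphism $\alpha_A$ gives $\beta_A\circ\alpha_A=\alpha_A\circ\beta_A$. Your route is slightly more economical and more general -- it proves $\Aut(f)$ abelian for any fully faithful $f$, with no appeal to essential surjectivity or to TR~9 -- and it cleanly isolates the underlying reason (abelianness of the centre). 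What the paper's target-side identification buys is consistency with the rest of the section, where the natural system $D^{\mathcal{Cat}}$ is described covariantly as the functor sending a category to the units of its centre, i.e.\ $\Aut(f)$ is identified with ${\sf Z}(\mathcal{D})^*$ of the \emph{codomain}; if one wanted to continue to that identification from your isomorphism one would still need essential surjectivity to pass from $\Aut(\id_{\mathcal{A}})$ to $\Aut(\id_{\mathcal{D}})$. For the statement of the lemma itself, your argument is complete.
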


\begin{proof}
We need to show that for each equivalence $f:C\to D$ in ${\mathcal Cat}$, the group $\Aut(f)$ is abelian. In order to do so, we show that for any two automorphisms $\alpha$, $\beta$ of $f$, there exists a unique isomorphism $u\in \Aut(Id_D)$ such that $\beta = f^*u+\alpha=\alpha+f^*u$. In fact, $\alpha+\beta=\alpha+(f^*u+\alpha)=(\alpha+f^*u)+\alpha=\beta+\alpha.$

For any object $Y\in D$, since $f$ is essentially surjective, we can select an object $X\in C$ and an isomorphism of $D$, $\eta_{X,Y}: f(X)\to Y$. We then define $u_Y:Y\then Y$, $Y\in D$, by $u_Y = \eta_{X,Y} -\alpha_X + \beta_X -\eta_{X,Y}$. First, we observe that this morphism does not depend on the choice of $\eta$. Indeed, for another family of isomorphisms $\mu_{X',Y}: f(X')\to Y$, since $f$ is fully faithful, there will exist a unique isomorphism $\phi_{X,X'}:X\to X'$ in $C$, such that $f(\phi_{X,X'}) = -\mu_{X',Y}+\eta_{X,Y}$. Hence, by the naturality of $\alpha$ and $\beta$, we have that $u_Y = \eta_{X,Y} -\alpha_X + \beta_X -\eta_{X,Y} = \mu_{X',Y} +f(\phi_{X,X'}) -\alpha_X+\beta_X -f(\phi_{X,X'}) - \mu_{X',Y} = \mu_{X',Y} -\alpha_{X'}+\beta_{X'} - \mu_{X',Y}$.
Next, we check that $u$ is a natural transformation. Let $\psi:Y\to Y'$ be a morphism in $D$. Since $f$ is an equivalence, there exists a morphism $\phi:X\to X'$ in $C$ such that $f(\phi_{X,X'})=-\eta_{X',Y'}+\psi_{Y,Y'}+\eta_{X,Y}$. Then the naturality of $\alpha$ and $\beta$ implies that 
\begin{align*}
\psi_{Y,Y'}+u_Y+\eta_{X,Y}&=\psi_{Y,Y'}+\eta_{X,Y} -\alpha_X + \beta_X= \eta_{X',Y'}+f(\phi_{X,X'})-\alpha_X + \beta_X \\
&=\eta_{X',Y'}-\alpha_X' + \beta_X'+f(\phi_{X,X'})=u_{Y'}+\psi_{Y,Y'}+\eta_{X,Y},
\end{align*}
and so $\psi_{Y,Y'}+u_Y=u_{Y'}+\psi_{Y,Y'}$.

For all $X\in C$, we have $u_{f(X)}=-\alpha_X + \beta_X$, implying that $\beta = \alpha+f^*u$. The equation TR 9 provides the second part of the equation.

As for the uniqueness of $u$, let us suppose that $\alpha+f^*u= \alpha+f^*v$ for $u,v\in \Aut(Id_D)$. Because $\alpha$ is an isomorphism, this implies that $f^*u=f^*v$, and so $u=v$ because $f$ is an equivalence.
\end{proof}

The abelian track category $\mathcal Cat$ defined above defines a class $\grupo{{\mathcal Cat}}$ in the third cohomology $H^3({\mathcal Cat}_\sim, D^{{\mathcal  Cat}})$.

\begin{Le}
For every small category ${\mathcal Cat}$, we have the restriction 
$$r:H^3({\mathcal Cat}_\sim, D^{{\mathcal Cat}})\to H^3(\Out({\mathcal Cat}),{\sf Z}({\mathcal Cat})^*),$$
with $k(\Phi) = \Phi^*(r(\grupo{{\mathcal Cat}}))$.
\end{Le}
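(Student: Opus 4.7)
The plan is to mimic the strategies used in Proposition \ref{u_cl} and in the subsequent treatment of the global Teichm\"uller class for rings. First I would identify the target of $r$: inside ${\mathcal Cat}_\sim$, consider the one-object subcategory whose unique object is the given small category $\mathcal C$ and whose morphisms are the isomorphism classes of autoequivalences of $\mathcal C$; by definition these form the group $\Out(\mathcal C)$, so this subcategory is $\underline{\Out(\mathcal C)}$. The restriction of the natural system $D^{{\mathcal Cat}}$ to this subcategory is, by construction, the $\Out(\mathcal C)$-module ${\sf Z}(\mathcal C)^*$. Hence the general restriction homomorphism of Section 2.2 specialises to
$$r:H^3({\mathcal Cat}_\sim,D^{{\mathcal Cat}})\to H^3(\Out(\mathcal C),{\sf Z}(\mathcal C)^*).$$

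Next I would regard the collective character $\Phi:G\to\Out(\mathcal C)$ as a functor $\underline\Phi:\underline G\to\underline{\Out(\mathcal C)}\hookrightarrow{\mathcal Cat}_\sim$ sending the unique object of $\underline G$ to $\mathcal C$. By Lemma \ref{1.5.m}(ii) applied to the abelian track category ${\mathcal Cat}$ (legitimate in view of Lemma \ref{niso_abe}), the pull-back class $\underline\Phi^*(\grupo{{\mathcal Cat}})=\Phi^*(r(\grupo{{\mathcal Cat}}))$ vanishes precisely when $\underline\Phi$ admits a lift to a $2$-functor $\underline G\dashrightarrow{\mathcal Cat}$. Unpacking Definition \ref{2fun} in this one-object situation, such a lift is exactly a factor set in the sense of Cegarra-Garz\'on-Grandjean: autoequivalences $F_1(x):\mathcal C\to\mathcal C$ in the class $\Phi(x)$ together with natural isomorphisms $F_2(x,y):F_1(x)F_1(y)\then F_1(xy)$ satisfying the pseudo-functor identity (\ref{pseudofunctor}). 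Hence $\Phi^*(r(\grupo{{\mathcal Cat}}))=0$ if and only if $\Phi$ is realizable, which by the theorem of Cegarra-Garz\'on-Grandjean quoted above is in turn equivalent to $k(\Phi)=0$.

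To promote this agreement of vanishing loci to the required equality, I would match the two $3$-cocycles term by term, in the spirit of Lemma \ref{Toda}. Pick a section $(s,\varphi)$ of $q:{\mathcal Cat}\to{\mathcal Cat}_\sim$ over the image of $\underline\Phi$: an autoequivalence $s(x)$ representing $\Phi(x)$ for each $x\in G$, and a natural isomorphism $\varphi(x,y):s(x)s(y)\then s(xy)$ for each pair. Formula (\ref{deftd}) then produces, at $(x,y,z)$, an automorphism of $s(xyz)$ which via $\sigma$ corresponds to an element of ${\sf Z}(\mathcal C)^*$; by construction this is a cocycle representative of $\Phi^*(r(\grupo{{\mathcal Cat}}))$. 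On the other hand, $k(\Phi)$ is defined in \cite{cegarra} from the very same data $(s,\varphi)$ by measuring precisely how far $\varphi$ is from satisfying the pseudo-functor associativity, i.e.\ by the same combination $s(x)_*\varphi(y,z)+\varphi(x,yz)-\varphi(xy,z)-s(z)^*\varphi(x,y)$ once the Godement product is unravelled as the conjugation action of $1$-arrows on natural transformations.

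The main obstacle, as in Lemma \ref{Toda}, will be the careful bookkeeping of signs, variances and Godement products in this dictionary. Concretely, one must identify $D^{{\mathcal Cat}}$ pulled back along $\underline\Phi$ with the $G$-module ${\sf Z}(\mathcal C)^*$ equipped with the action induced by $\Phi$ via the formula $xa=x_*(x^{-1})^*a$ displayed in the groupoid subsection, and verify that under this identification the coface operators of the Baues-Wirsching complex on $\underline G$ reduce to the standard group-cohomology coboundary used by Cegarra-Garz\'on-Grandjean to define $k(\Phi)$. Once this comparison is set up, the two explicit $3$-cocycles coincide on the nose, and the equality of their classes follows.
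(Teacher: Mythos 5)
The paper states this lemma without proof, so there is no in-text argument to compare against; your proposal is correct and follows exactly the strategy the paper uses for the parallel statements, namely Proposition \ref{u_cl} and Lemma \ref{Toda}: realise $\underline{\Out(\mathcal C)}$ as the one-object subcategory of ${\mathcal Cat}_\sim$ at the object $\mathcal C$ (with restricted natural system ${\sf Z}(\mathcal C)^*$) to obtain $r$, factor $\underline\Phi$ through that subcategory so that $\underline\Phi^*=\Phi^*\circ r$, and then compare the Toda cocycle (\ref{deftd}) with the defining cocycle of $k(\Phi)$ for a common section $(s,\varphi)$. You are also right that the vanishing-locus argument via Lemma \ref{1.5.m}(ii) and the realizability theorem of Cegarra--Garz\'on--Grandjean would only show that the two classes vanish simultaneously, and that the actual equality requires the cocycle-level identification (including matching the $G$-action $xa=x_*(x^{-1})^*a$ with conjugation by a representative autoequivalence); that final comparison is only sketched in your write-up, but at the same level of detail as the paper's own proof of Lemma \ref{Toda}.
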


\subsection{The class of Cegarra-Garz\'on-Ortega}
We would like to apply the Toda class to monoidal categories. To obtain an appropriate  abelian track category we consider the following $2$-category ${\mathcal Mcat}$. Objects of ${\mathcal Mcat}$ are small monoidal categories, $1$-morphisms are monoidal equivalences and $2$-morphisms are natural isomorphisms. It follows from Lemma \ref{niso_abe} that ${\mathcal Mcat}$ is an abelian track category. The corresponding homotopy category ${\mathcal Mcat}_\sim$ is a groupoid. As any abelian track category, it defines a class
$$\grupo{{\mathcal Mcat}}\in H^3({\mathcal Mcat}_\sim, D^{\mathcal Mcat}).$$
Here $D^{\mathcal Mcat}$ is a natural system such that for any monoidal equivalence $f:({\mathcal C},\otimes)\to {\mathcal C'},\otimes')$ the abelian group $D^{\mathcal Mcat}_{q(f)}$ is isomorphic to the following group
$${\sf Z}_f=\{\alpha:f\then f| \ \alpha \ {\rm is \ a} \ {\rm monoidal} \ {\rm isomorphism}\}.$$
Here as usual $q$ denotes the natural functor ${\mathcal Mcat}_0 \to {\mathcal Mcat}_\sim$.

For $f=\id_{\mathcal C}$, the group ${\sf Z}_{\id_{\mathcal C}}$ is known as the \emph{centre} of the monoidal category $({\mathcal C},\otimes)$,  and is denoted by ${\sf Z}({\mathcal C},\otimes)^*$, see \cite[p.631]{ortega}.

For a monoidal category $({\mathcal C}, \otimes)$ the group of automorphisms of $({\mathcal C}, \otimes)$ in ${\mathcal Mcat}_\sim$ is known as the Picard group of $({\mathcal C}, \otimes)$ \cite{ortega} and is denoted by ${\sf Pic}({\mathcal C}, \otimes)$, compare with \cite[p.633]{ortega}. Thus ${\sf Pic}({\mathcal C}, \otimes)$ is the set of isomorphic classes of monoidal autoequivalences of $({\mathcal C}, \otimes)$, with the multiplication induced by the composition of monoidal autoequivalences.  

Let $G$ be a group. A group homomorphism $\varrho\to {\sf Pic}({\mathcal C}, \otimes)$ can be seen as a functor $\underline{\varrho}:\underline{G}\to {\mathcal Mcat}_\sim$, which sends the unique object of $\underline{G}$ to $({\mathcal C},\otimes)$. Hence we obtain the class
$${\mathcal T}(\varrho):=\varrho^*(\grupo{{\mathcal Mcat}})
\in H^3(G, {\sf Z}({\mathcal C},\otimes)^*)$$
which was considered in \cite[637]{ortega}. The map given by $\varrho\mapsto {\mathcal T}(\varrho)$ is known as the \emph{Teichm\"uller obstruction map}, \cite[637]{ortega}. The name comes from the problem to lift the homomorphism $\varrho$, equivalently the functor $\underline{\varrho}:\underline{G}\to {\mathcal Mcat}_\sim$ to a $2$-functor $\underline{G}\to {\mathcal Mcat}$. Thanks to Lemma \ref{1.5.m} such a lifting exists iff ${\mathcal T}(\varrho)=0$ and if this is the case. then set of equivalence classes of such liftings is a torsor over $H^2(G, {\sf Z}({\mathcal C},\otimes)^*)$. So, we obtain the main results of \cite{ortega}, see Proposition 4.1. Theorem 5.1 and Theorem 5.2 of \cite{ortega}. 
 
Moreover, a variant of this theory is also considered in \cite{ortega}, where instead of monoidal categories , so called $k$-linear monoidal categories are considered, where $k$ is a fixed commutative ring. By considering abelian track category of all small $k$-linear monoidal categories, $k$-linear monoidal equivalences and $k$-linear monoidal natural isomorphisms, one obtains similar results for $k$-linear monoidal categories. Details left to an interested reader.

\subsection{Obstruction class of Chen-Du-Wang}

In the recent preprint \cite{chen}, the authors extended the theory of group extensions to groupoids. They, in fact, considered extensions of Lie groupoids. We will show how this theory (in the discrete case) can be obtained as a particular case of our approach. %In a forthcoming paper we will extend Baues-Wirsching cohomology for topological and differentiable categories and we will fully recover the results of \cite{chen}. 

Let $\sf K$ be a groupoid. All groupoids are assumed to be nonempty. The set of objects of ${\sf K}$ will be denoted by ${\sf K}_0$, while ${\sf K}_1$ denotes the set of morphisms of ${\sf K}$. For an arrow $\alpha\in {\sf K}_1$, we let $s_{\sf K}(\alpha)$ be the source of $\alpha$, while $t_{\sf K}(\alpha)$ denotes the target of $\alpha$.

A \emph{Chen-Du-Wang extension} (compare with \cite[Definition 3.1] {chen}) of a groupoid $\sf K$ by a groupoid $\sf A$ is the following data:

\begin{enumerate}[i)]

\item A groupid $\sf G$ and a morphism of groupoids $\varPhi=(\varPhi_0,\varPhi_1): {\sf G}\to {\sf K}$.

\item The identification ${\sf G}_i={\sf A}_i\times {\sf K}_i$, such that $\varPhi_i$ is the projection to the second factor, $i=1,2$.

\item The source map satisfies $s_{\sf G} =s_{\sf A}\times s_{\sf K}$.

\item For objects $k\in {\sf K}_0$, $a\in {\sf A}_0$, one has
$$ \id_{(a,k)}=(\id_{a},\id_{k}).$$

\item The target map satisfies the equation
$$t_{\sf G}(\alpha, \id_k)=(t_{\sf A}(\alpha), k).$$
Here $k\in {\sf K}_0$ and $\alpha$ is a morphism of $\sf A$. 

\item For any composable pair of morphisms $\alpha$ and $\beta$ of $\sf A$, one has

$$(\alpha,\id_k)\circ (\beta,\id_k)=(\alpha\beta,\id_k).$$

\end{enumerate}

If ${\sf G}$ is a Chen-Du-Wang extension of a groupoid ${\sf K}$ by ${\sf A}$, we write
$$1\to {\sf A}\to {\sf G}\xto{\varPhi} {\sf K}\to 1.$$

\begin{Le} If $1\to {\sf A}\to {\sf G}\xto{\varPhi} {\sf K}\to 1$ is a Chen-Du-Wang extension, then $\varPhi$ is a Grothendieck cofibration and for any object $k\in {\sf K}_0$, the maps given by
$$a\mapsto (a,k), \ \ \alpha\mapsto (\alpha,\id_k), \ a\in {\sf A}_0, \alpha\in {\sf A}_1$$
yield an isomorphism of groupoids
$${\sf A}\to \Phi^{-1}(k).$$
\end{Le}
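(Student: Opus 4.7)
The statement has two parts—that $\varPhi$ is a Grothendieck cofibration, and that the prescribed rule $a\mapsto (a,k)$, $\alpha\mapsto(\alpha,\id_k)$ defines an isomorphism of groupoids ${\sf A}\to \varPhi^{-1}(k)$—and I would prove them in that order. For the cofibration part, the plan is to write down explicit cocartesian lifts. Given an object $(a,k)\in {\sf G}_0$ and a morphism $f:k\to k'$ in ${\sf K}$, the candidate lift is $(\id_a,f)\in {\sf A}_1\times {\sf K}_1={\sf G}_1$. Axiom (iii) gives $s_{\sf G}(\id_a,f)=(s_{\sf A}(\id_a),s_{\sf K}(f))=(a,k)$, and $\varPhi(\id_a,f)=f$ by (ii). To verify cocartesianness, I would exploit that ${\sf G}$ and ${\sf K}$ are both groupoids: given any $g:(a,k)\to (c,k'')$ and a factorisation $\varPhi(g)=c'\circ f$ in ${\sf K}$, set $\tilde g := g\circ (\id_a,f)^{-1}$. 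Because ${\sf G}$ is a groupoid this is well-defined, it satisfies $\tilde g\circ(\id_a,f)=g$ automatically, and it projects under $\varPhi$ to $\varPhi(g)\circ f^{-1}=c'$. Uniqueness is immediate from the invertibility of $(\id_a,f)$. Alternatively, one could simply invoke the earlier lemma that any full functor between groupoids is a cofibration, since fullness of $\varPhi$ is an easy consequence of the product description ${\sf G}_1={\sf A}_1\times{\sf K}_1$ together with axiom (iii).

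For the isomorphism with the fibre, I plan to define $\Psi:{\sf A}\to \varPhi^{-1}(k)$ by $\Psi(a)=(a,k)$ and $\Psi(\alpha)=(\alpha,\id_k)$, and then read each of the axioms (ii)--(vi) as exactly one component of functoriality. Axiom (ii) combined with the definition of the fibre yields bijectivity on both objects ($\varPhi^{-1}(k)_0=\{(a,k):a\in {\sf A}_0\}$) and morphisms ($\varPhi^{-1}(k)_1=\{(\alpha,\id_k):\alpha\in {\sf A}_1\}$, because a morphism $(\alpha,\beta)$ lies over $\id_k$ iff $\beta=\id_k$). Axiom (iii) then gives preservation of source, axiom (v) preservation of target, axiom (iv) preservation of identities, and axiom (vi) compatibility with composition. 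Together these say precisely that $\Psi$ is an isomorphism of groupoids.

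No step here presents genuine difficulty; the proof is essentially a dictionary between the six defining clauses of a Chen--Du--Wang extension and the two conclusions. The only point that warrants care is the cocartesian verification, where one must invoke the groupoid hypothesis on ${\sf K}$ to justify the existence of $f^{-1}$ in forming the factor $c'=\varPhi(g)\circ f^{-1}$, and hence the groupoid hypothesis on ${\sf G}$ to form the inverse $(\id_a,f)^{-1}$ on which the uniqueness of the lift ultimately rests.
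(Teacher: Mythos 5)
Your proposal is correct and matches the paper's proof in substance: the paper likewise handles the cofibration part by citing its earlier lemma that a full functor between groupoids is a cofibration (fullness coming from the product description in axiom (ii)), and then reads off functoriality from axioms (iv) and (vi) and the identification of the fibre's objects and morphisms. Your primary route merely unpacks that cited lemma by exhibiting the explicit lift $(\id_a,f)$ and using that every morphism in a groupoid is invertible, hence cocartesian, which is exactly the one-line argument the paper relies on.
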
 

\begin{proof} According to the Lemma \ref{gr-ful=cof}, we only need to check that $\varPhi$ is full, but this follows from the condition ii). According to conditions iv) and vi) these maps define indeed a functor. Objects of $\Phi^{-1}(k)$ have the form $(a,k)$, where $a\in {\sf A}_0$. Moreover, morphisms in $\Phi^{-1}(k)$ have the form $(\alpha,\id_k)$ and hence the above functor is an isomorphism.  
\end{proof}

As in any Grothendieck cofibration, the assignment $k\mapsto \Phi^{-1}(k)$ defines a $2$-functor. Hence we immediately obtain the following fact (compare with \cite[Section 3.3]{chen}).

\begin{Co} If $1\to {\sf A}\to {\sf G}\xto{\varPhi} {\sf K}\to 1$ is a Chen-Du-Wang extension, then there is a $2$-functor from the groupoid $\sf K$ to the 2-category of groupoids, which has the same value ${\sf A}$ on each object of ${\sf K}$ and for the morphism $\kappa:k_1\to k_2$ of the groupoid $\sf K$, the induced functor $$\kappa_*:{\sf A}=\Phi^{-1}(k_1)\to \varPhi^{-1}(k_2)={\sf A}$$
	is given on objects by
	$$\kappa_*(a)=b$$
	where the $(b,k_2)$ is the target of $(\id_a,\kappa)$.
\end{Co}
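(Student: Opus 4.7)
The plan is to invoke the general Grothendieck correspondence recalled in the Recollection subsection, applied to the cofibration $\varPhi$ furnished by the previous lemma, and then unpack the cleavage explicitly in terms of the coordinates provided by the Chen-Du-Wang data.

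First, by the previous lemma $\varPhi:{\sf G}\to {\sf K}$ is a Grothendieck cofibration and, for each $k\in {\sf K}_0$, the fibre $\varPhi^{-1}(k)$ is canonically isomorphic to ${\sf A}$ via $a\mapsto (a,k)$ on objects and $\alpha\mapsto (\alpha,\id_k)$ on morphisms. By Grothendieck's correspondence, any cloven cofibration over ${\sf K}$ gives rise to a $2$-functor ${\sf K}\to {\mathcal Cat}$ whose value on an object $k$ is the fibre $\varPhi^{-1}(k)$ and whose value on a morphism $\kappa:k_1\to k_2$ is the functor $\kappa_*:\varPhi^{-1}(k_1)\to\varPhi^{-1}(k_2)$ determined by the chosen cocartesian lifts. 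Composing with the fibre isomorphisms above, the common value of this $2$-functor on every object of ${\sf K}$ becomes ${\sf A}$, as claimed.

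Next I would fix the cleavage. Given $\kappa:k_1\to k_2$ in ${\sf K}$ and an object $(a,k_1)$ in the fibre, the natural candidate for the cocartesian lift is the arrow $(\id_a,\kappa)$ of ${\sf G}$. I would check: (a) it is well defined by condition ii), since ${\sf G}_1={\sf A}_1\times{\sf K}_1$; (b) it has source $(a,k_1)$ by condition iii) together with iv); (c) it is cocartesian, because ${\sf G}$ and ${\sf K}$ are groupoids so every morphism of ${\sf G}$ is an isomorphism, and any isomorphism is cocartesian (this is precisely the observation used to prove the earlier lemma that full functors between groupoids are cofibrations). Reading the target as $(b,k_2)$ (the equation $t_{\sf G}(\id_a,\kappa)=(b,k_2)$ is consistent with condition v) for the case $\kappa=\id_k$), the induced functor $\kappa_*$ sends $a$ to $b$ on objects, which is exactly the formula stated.

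Finally, the coherence data of the $2$-functor (the isomorphisms $\kappa_*\lambda_*\Rightarrow(\kappa\lambda)_*$ and the identity constraints) are automatically produced by the Grothendieck correspondence from the chosen cleavage; condition vi) guarantees strict compatibility on morphisms of the form $(\alpha,\id_k)$, which is what one needs to transfer these coherence $2$-cells back along the fibre isomorphisms ${\sf A}\xrightarrow{\sim}\varPhi^{-1}(k)$. The main (minor) obstacle is the bookkeeping in verifying that $(\id_a,\kappa)$ really is a morphism of ${\sf G}$ with the prescribed source; once this is checked, the rest is pure invocation of the Grothendieck construction.
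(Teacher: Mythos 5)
Your proposal is correct and follows exactly the route the paper takes: the previous lemma gives that $\varPhi$ is a cofibration with fibres identified with ${\sf A}$, and the Grothendieck correspondence from the recollection subsection then yields the $2$-functor, with the cleavage given by the morphisms $(\id_a,\kappa)$ (which are cocartesian since every morphism in a groupoid is an isomorphism). The paper treats this as immediate and offers no further argument, so your more explicit verification of the source, target and coherence data is just a fuller write-up of the same proof.
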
 

Conversely, having such a $2$-functor $\varPsi$, the Grothendieck construction ${\sf K}\int \varPsi$ gives a Chen-Du-Wang extension, which explains \cite[Theorem 4.1]{chen}.

Hence the classification of Chen-Du-Wang extensions completely reduces to the study of appropriate $2$-functors, which can be done based on properties of the Toda class of the abelian track category ${\sf SGpd}$. Objects of ${\sf SGpd}$ are small groupoids, $1$-morphisms are isomorphisms of groupoids and $2$-morphisms are natural isomorphisms of isomorphisms of groupoids. We will see that the corresponding Toda class $\grupo{{\sf SGpd}}\in H3({\sf SGpd}_\sim,D^{\sf SGpd})$ explains several results obtained in \cite{chen}. First of all, observe that for any small groupoid $\sf A$, we have
$$D^{\sf SGpd}_{id_{\sf A}}=\{\xi:\id_{\sf A}\then \id_{\sf A}\}.$$ Thus
$D^{\sf SGpd}_{id_{\sf A}}={\sf Z}({\sf A})$ is the centre of $\sf A$. The second observation is the fact that the full subcategory of  ${\sf SGpd}_\sim$ corresponding to the object $\sf A$ is the one object category corresponding to the group (in the notations \cite{chen}) $\overline{{\sf SAut(A)}}$, which is the group of isomorphism classes of automorphisms of ${\sf A}$. Hence by restricting the class $\grupo{{\sf SGpd}}$ to this subcategory one obtains the class 
$$Res_{\sf A}{\grupo{{\sf SGpd}}} \in H^3(\overline{{\sf SAut(A)}}, {\sf Z}({\sf A})).$$

Let us take a small groupoid ${\sf K}$ and let $\bar{\Lambda}$ be a functor from ${\sf K}$ to the category ${\sf SGpd}_\sim$ such that all objects of $\sf K$ map to ${\sf A}$. Such a functor is called a \emph{band} in \cite[Definition 3.4]{chen}. The problem still to be answered is under what conditions $\bar{\Lambda}$ can be lifted to a $2$-functor $\Lambda:{\sf K}\to {\sf SGpd}$. By Lemma \ref{1.5.m} this happens if and only if the class
$$\bar{\Lambda}^*(Res_{\sf A}{\grupo{{\sf SGpd}}}\in H^3({\sf K}, {\sf Z}({\sf A}))$$
is zero. Moreover, if this happens then isomorphism classes of such liftings form a torsor over $H^2({\sf K}, {\sf Z}({\sf A}))$. These results reprove \cite[Theorem 4.4, Theorem 4.5 and Theorem 5.3]{chen}.

%As we can replace the natural system $D^{\mathcal Gpd}$ with a functor ${\mathcal F}$, this becomes $H^3({\mathcal Gpd}, {\mathcal F})$. The factor set 

\end{document}